\documentclass[10pt,b5paper]{article}

\usepackage[T1]{fontenc}
\usepackage[utf8]{inputenc}
\usepackage[english]{babel}
\usepackage{amsmath,amssymb,amsthm}
\usepackage{lmodern}
\usepackage{geometry}
\usepackage{graphicx}
\usepackage{url}
\usepackage{comment}
\usepackage{titlesec}
\titleformat{\paragraph}[runin]{\normalfont\itshape}{}{0pt}{}[.]
\titlespacing*{\paragraph}{0pt}{0.7em}{0.5em}

\newtheorem{theorem}{Theorem}[section]
\newtheorem{lemma}[theorem]{Lemma}
\newtheorem{proposition}[theorem]{Proposition}
\newtheorem{corollary}[theorem]{Corollary}
\theoremstyle{definition}
\newtheorem{definition}[theorem]{Definition}
\theoremstyle{remark}
\newtheorem{remark}[theorem]{Remark}
\numberwithin{equation}{section}

\DeclareMathOperator{\E}{E}
\DeclareMathOperator{\Var}{Var}

\DeclareMathOperator{\Bin}{Bin}
\DeclareMathOperator{\Mult}{Mult}

\title{Discrete total variation under the multinomial, Part~1:\\
expectation under the uniform null}
\author{
N.~Bani\'c\\
Gideon Brothers, 10000 Zagreb, Croatia\\
\texttt{nbanic@gmail.com}
\and
N.~Elezovi\'c\\
Department of Applied Mathematics,\\
Faculty of Electrical Engineering and Computing,\\
University of Zagreb, 10000 Zagreb, Croatia\\
\texttt{neven.elezovic@fer.hr}
}
\date{}

\begin{document}
\maketitle

\begin{abstract}
We study the expectation of the discrete total variation statistic
$d=\sum_{i=1}^{n-1}|X_{i+1}-X_i|$ of counts $X_1,\dots,X_n$ over $n$ ordered
bins with sample size $N$, under the uniform multinomial null. In the notation
used throughout the series, a reference distribution $\mathbf r$ defines
$d_{\mathbf r}(x)=\|x-N\mathbf r\|_V$; here $\mathbf r=\mathbf u$ is uniform and
we write simply $d$. We obtain an
exact single-sum formula, a closed form in central trinomial coefficients valid
for every $n\ge2$, with one-term evaluations at $n=2,4$ and a complete D-finite
characterization for $n=3$. The per-edge
expectation $W_N=\E|X_2-X_1|$ satisfies an exact three-term recurrence, from
which we derive its complete $1/N$ expansion with the $n$-independent constant
$\E(d^\circ/\sqrt{Nn})\to 2/\sqrt\pi$ for the cyclic statistic
$d^\circ=d^\circ_{\mathbf u}$, independent of the number of bins; the
complementary sparse regime (fixed $N$, $n\to\infty$) is treated in a separate
paper. Conditioning on a two-bin total reduces each edge to a symmetric random
walk; as the subsequent parts of the series will show, this method carries the
analysis to non-uniform reference laws.
\end{abstract}

\medskip\noindent\textbf{Keywords:} discrete total variation; multinomial
distribution; goodness-of-fit; successive differences; central binomial
coefficient; central trinomial coefficient; asymptotic expansion.

\medskip\noindent\textbf{MSC 2020:} 60C05 (primary); 62E15; 62G10; 05A16; 41A60.

\section{Introduction}

	A great many practical questions reduce to a single one: \emph{is this histogram
	flat?} One observes counts $X_1,\dots,X_n$ across $n$ \emph{ordered} categories ---
	analog-to-digital converter codes, digit values, time bins --- and must decide whether
	they are consistent with the uniform law. The classical instrument is Pearson's
	chi-square test~\cite{pearson1900}: universal, well understood, and rightly the
	default. But it has one blind spot, invisible precisely because it is so fundamental:
	\emph{chi-square does not see order}. It treats the bins as an unordered set, so any
	permutation of the categories leaves its value unchanged. 
	This is harmless for many alternatives, but not for deviations that alternate from
	one bin to the next, $p_i=\tfrac1n+(-1)^i\varepsilon$ (a probability vector for even $n$; for odd $n$ one endpoint is adjusted). Such ``comb'' patterns occur,
	for example, in converter codes, digit preferences, and rounded or last-digit data.
	They are visible through neighboring differences, but not through an unordered
	sum of squared residuals alone.
	Chi-square is of course consistent against any fixed alternative, comb included;
	the issue is \emph{efficiency}. The comb's noncentrality is spread over $n-1$
	degrees of freedom, whereas a statistic that compares neighbors responds to it directly
	--- the difference that matters for local alternatives $\varepsilon\sim c/\sqrt N$
	and moderate $N$.

	The natural quantity that compares neighbors is the \emph{discrete total variation}
	(DTV),
	\[
		d=\sum_{i=1}^{n-1}|X_{i+1}-X_i|,
	\]
	the $\ell_1$ norm of the discrete derivative of the histogram. (Throughout, ``total variation'' refers to this $\ell^1$ norm of successive differences, not to the total-variation distance between measures.) As a functional,
	the general statistic used in the sequel is centered at a reference distribution
	$\mathbf r=(r_1,\ldots,r_n)$:
	\[
		d_{\mathbf r}(x)=\|x-N\mathbf r\|_V
		=\sum_{i=1}^{n-1}|(x_{i+1}-Nr_{i+1})-(x_i-Nr_i)|.
	\]
	The present paper treats the uniform reference
	$\mathbf u=(1/n,\ldots,1/n)$. In that case the centering cancels along every
	edge, so $d_{\mathbf u}(X)=\sum_{i=1}^{n-1}|X_{i+1}-X_i|$; throughout this
	paper we write simply $d$. Its cyclic analogue is denoted $d^\circ$
	(generally $d^\circ_{\mathbf r}$).
	Discrete total variation is standard in signal and image processing
	(total-variation denoising~\cite{rudin_osher_fatemi}); its \emph{statistical} use,
	as an outlier measure for histograms, was introduced in~\cite{banic_elezovic_tvor}
	and developed into the comb test in~\cite{banic_elezovic_ct}. It is
	order-sensitive, trivial to compute, and can achieve substantially higher power
	than chi-square against comb-like alternatives --- a claim documented empirically
	in~\cite{banic_elezovic_ct}; the present paper proves nothing about
	power and supplies the first moment of the null calibration.

	The statistic sits in a classical lineage. It is the $\ell^1$ analogue of the
	mean-square successive difference $\sum_i(X_{i+1}-X_i)^2$ of Abbe and
	von~Neumann~\cite{vonneumann1941,vonneumann_kent1941}, whose studentized form underlies the
	Durbin--Watson statistic~\cite{durbin_watson1950}; its $\ell^1$ analogue, the mean
	absolute successive difference, is a classical ordered-data statistic~\cite{kamat1953}.
	Among order-sensitive
	goodness-of-fit tools for ordered categories --- Neyman's smooth tests and their
	categorical developments~\cite{rayner_best}, the cumulative chi-square statistics
	of Hirotsu~\cite{hirotsu1986} and Nair~\cite{nair1987}, spacings
	tests~\cite{pyke1965} --- DTV is distinguished by what it does \emph{not} require.
	For the period-2 comb the two-sided contrast $|T|$, $T=\sum_i(-1)^iX_i$, is the
	locally optimal one-degree-of-freedom competitor and needs only the period; DTV
	needs neither the period nor the globality of the pattern --- it responds to any
	locally alternating or rough segment --- at the price of a composite, less
	concentrated alternative. Finally, DTV is the $\ell^1$ norm of the discrete
	\emph{derivative}, to be contrasted with the classical $\ell^1$ distance to
	uniformity $\sum_i|X_i-N/n|$, whose first-moment theory rests on the same
	de~Moivre-type identities that will drive our computations.

	A test statistic is only as useful as our knowledge of its null distribution, and here
	the price of order-sensitivity comes due. For the \emph{first} moment the difficulty
	is not the dependence of the counts: linearity of expectation and exchangeability
	reduce $\E(d)$ to a single two-bin marginal. The genuine obstacles at first-moment
	level are the non-smoothness of the absolute value and the asymptotics; the dependence
	through $\sum_iX_i=N$ and the overlap of neighboring summands $|X_{i+1}-X_i|$ begin
	to bite at the \emph{variance}, treated in Part~2, and the exact distribution is
	harder still. The exact null law of $d$ is already computable by dynamic
	programming~\cite{banic_elezovic_ct}, with a fitted gamma law calibrating beyond
	that; closed-form moments are what an analytic large-$N$ calibration needs.
	\emph{The present paper opens that program with the first moment under
	the uniform null}, the case where the theory is cleanest. The expectation has a closed form in
	central trinomial coefficients, valid for every $n\ge2$, and an $n$-independent
	limit $\E(d^\circ/\sqrt{Nn})\to2/\sqrt\pi$, together with its complete $1/N$ expansion --- governed by an
	exact three-term recurrence that delivers the coefficients in closed recursive form. Two facts
	drive every computation and are recalled in the Preliminaries: the edge variable
	$D_j=X_{j+1}-X_j$ depends only on a two-bin marginal, and that marginal splits into
	independent binomial steps; the same mixture-conditioning method also carries the
	analysis beyond the uniform case. We note at the outset that DTV is not a universal
	replacement for chi-square --- it gains over chi-square chiefly against period-2
	(alternating) departures, the gain fading as the period grows~\cite{banic_elezovic_ct},
	and is otherwise complementary to it.

	This paper is the first part of a larger study of exact and asymptotic moments of
	discrete total variation under multinomial sampling. Part~2 treats the variance
	under the uniform law; Parts~3 and~4 the expectation and variance under a general
	reference law $\mathbf r$, where, as Part~3 will show, mixture-conditioning still
	applies; Part~5 studies the cyclic
	statistic as its subject proper --- its variance, the exact linear--cyclic transfer
	identity, and the large-bin variance ridge --- whereas here $\E(d^\circ)$ follows from
	exchangeability at no extra cost. The distributional theory --- the asymptotic null
	laws of $d$ that complement the exact computation of~\cite{banic_elezovic_ct} ---
	is developed in a companion paper, for which the moments computed in this series
	are the inputs. Two regimes
	must be kept apart there: for \emph{fixed} $n$ as $N\to\infty$ the number of
	summands is fixed and $d/\sqrt N$ converges to a functional of finitely many
	dependent folded Gaussians, which is not normal in general; approximate normality
	of $d$ is a \emph{large-$n$} phenomenon, the folded summands being $1$-dependent
	in the Gaussian limit (exactly so after Poissonization), so an $m$-dependent
	central limit theorem~\cite{hoeffding_robbins1948} applies to their number $n$
	(cf.~Morris~\cite{morris1975} for the multinomial background). For clarity of
	scope: the present paper computes first moments under the uniform null only; it does
	not treat the variance (Part~2), non-uniform laws (Parts~3--4), the sparse regime of
	fixed $N$ and $n\to\infty$ (a separate paper), or the null distribution (the
	companion paper). Only the facts needed for the present expectation calculation are
	recalled below.

\section{Preliminaries: the multinomial and the edge variables}
	\label{sec:multinomial-basics}

	We collect the few facts about the multinomial law on which the expectation
	computations rest. Two carry the whole analysis: an edge $D_j=X_{j+1}-X_j$
	depends only on a two-bin marginal, and that marginal factors into independent
	binomial steps.

\subsection{Binomial expansions}
	\label{ssec:binomial}

The following two results will be used at various places throughout this paper.

\begin{proposition}[Central binomial coefficients, \cite{buric_elezovic,elezovic2015}]
	\label{prop:binomial}
	The central binomial coefficient has the asymptotic expansion
	\begin{equation}
		\label{exp-binom}
		\binom{2m}{m}\sim \frac{2^{2m}}{\sqrt{m\pi}}
		\sum_{k=0}^{\infty}\omega_k\,m^{-k},
	\end{equation}
	where $\omega_0=1$ and, for $k\ge1$,
	\begin{equation}
		\omega_k=\frac1k \sum_{j=1}^{\lfloor (k+1)/2\rfloor}
		\frac{(2^{-2j}-1)B_{2j}}{j}\,\omega_{k-2j+1},
	\end{equation}
	with $B_{2j}$ denoting the Bernoulli numbers. In particular
	$\omega_1=-\tfrac18$, $\omega_2=\tfrac1{128}$, $\omega_3=\tfrac5{1024}$.
\end{proposition}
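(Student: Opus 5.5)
The plan is to pass through logarithms. We write $\binom{2m}{m}=\Gamma(2m+1)/\Gamma(m+1)^2$ and apply the duplication formula $\Gamma(2m+1)=2m\,\Gamma(2m)=2m\cdot 2^{2m-1}\pi^{-1/2}\Gamma(m)\Gamma(m+\tfrac12)$. This gives the exact identity
\[
\binom{2m}{m}=\frac{2^{2m}}{\sqrt{\pi}}\,\frac{\Gamma(m+\tfrac12)}{\Gamma(m+1)} .
\]
The problem therefore reduces to the asymptotic expansion of the gamma ratio $\Gamma(m+\tfrac12)/\Gamma(m+1)=m^{-1/2}\sum_k\omega_k m^{-k}$.

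For the ratio we use Stirling's series
\[
\log\Gamma(m+a)\sim (m+a-\tfrac12)\log m-m+\tfrac12\log 2\pi+\sum_{j\ge1}\frac{(-1)^{j+1}B_{j+1}(a)}{j(j+1)m^{j}} ,
\]
which involves Bernoulli polynomials. Taking the difference at $a=\tfrac12$ and $a=1$ yields
\[
\log\frac{\Gamma(m+\tfrac12)}{\Gamma(m+1)}\sim-\tfrac12\log m+\sum_{j\ge1}\frac{(-1)^{j+1}\bigl(B_{j+1}(\tfrac12)-B_{j+1}(1)\bigr)}{j(j+1)}m^{-j}.
\]
We then insert the known values $B_{p}(1)=B_p$ for $p\ge2$ and $B_p(\tfrac12)=(2^{1-p}-1)B_p$, and note that both vanish for odd $p\ge3$. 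Only the terms with $j+1=2\ell$ even survive, so
\[
\log\Bigl(\sqrt m\,\tfrac{\Gamma(m+1/2)}{\Gamma(m+1)}\Bigr)\sim\sum_{\ell\ge1}c_\ell\,m^{-(2\ell-1)},\qquad c_\ell=-\frac{(2^{1-2\ell}-2)B_{2\ell}}{(2\ell-1)2\ell}=\frac{(1-2^{-2\ell})B_{2\ell}}{\ell(2\ell-1)}.
\]
The signs here need rechecking against the stated recurrence, which has $(2^{-2j}-1)$; the check is the computation below.

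Next we exponentiate. Let $F(x)=\sum_k\omega_kx^k=\exp(G(x))$ with $G(x)=\sum_\ell c_\ell x^{2\ell-1}$. Differentiating gives $xF'=xG'F$, which is the standard recurrence for coefficients of an exponential:
\[
k\,\omega_k=\sum_{\ell}(2\ell-1)c_\ell\,\omega_{k-2\ell+1}.
\]
Here $(2\ell-1)c_\ell=\pm(1-2^{-2\ell})B_{2\ell}/\ell$, which is exactly of the stated shape, with the summation range $2\ell-1\le k$, i.e.\ $\ell\le\lfloor (k+1)/2\rfloor$. The normalization $\omega_0=1$ comes from $G(0)=0$.

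The main obstacle is sign bookkeeping, both in the Bernoulli-polynomial Stirling series and in the midpoint identity. We resolve it by checking $k=1$. In the stated form, $\omega_1=(2^{-2}-1)B_2=-\tfrac34\cdot\tfrac16=-\tfrac18$. This agrees with the direct value $\Gamma(m+\tfrac12)/\Gamma(m+1)=m^{-1/2}(1-\tfrac1{8m}+\dots)$, which fixes the sign convention. The remaining values then follow from the recurrence. For $k=2$, $\omega_2=\tfrac12(-\tfrac34\cdot\tfrac16)\omega_1=\tfrac1{128}$. For $k=3$, $\omega_3=\tfrac13\bigl[-\tfrac18\omega_2+\tfrac{(1/16-1)(-1/30)}{2}\bigr]=\tfrac13\bigl[-\tfrac1{1024}+\tfrac1{64}\bigr]=\tfrac5{1024}$, which confirms the statement.

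Finally, the formal manipulations are legitimate because Stirling's series is a genuine asymptotic expansion: its remainder after any finite number of terms is $O(m^{-J})$. Exponentiating a finite truncation therefore preserves the asymptotic sense of the expansion.
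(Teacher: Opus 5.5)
Your argument is correct and follows the route the paper points to through its citations and its remark on the shifted case, where the recursion comes from $B_{2j+1}(\tfrac14)$ in the logarithm of a gamma quotient. You reduce to $\Gamma(m+\tfrac12)/\Gamma(m+1)$ by the duplication formula, apply the Bernoulli-polynomial Stirling series to its logarithm, and exponentiate via $xF'=xG'F$.

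The one blemish is a spurious minus sign in your $c_\ell$. Since $(-1)^{j+1}=+1$ for $j=2\ell-1$, you get directly $c_\ell=(2^{-2\ell}-1)B_{2\ell}/\bigl(\ell(2\ell-1)\bigr)$, so $c_1=-\tfrac18$. Hence $(2\ell-1)c_\ell=(2^{-2\ell}-1)B_{2\ell}/\ell$ exactly as in the statement, and no sign needs to be fixed afterwards by the $k=1$ check.
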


	The expansion is used in the standard asymptotic sense as $m\to\infty$:
	truncation after the term $k=R$ leaves a relative error $O_R(m^{-R-1})$, with
	the implied constant depending on $R$.

\begin{proposition}[Shifted central binomial expansion, \cite{elezovic2015}]
	\label{prop:binomial-shift}
	With the shift $\alpha=\frac14$, the central binomial coefficient has the
	asymptotic expansion
	\begin{equation}
		\binom{2m}{m}\sim\frac{4^{m}}{\sqrt{\pi(m+\alpha)}}
		\left( \sum_{k=0}^{\infty}\tau_k(m+\alpha)^{-k}\right),
	\end{equation}
	where $\tau_0=1$ and, for $k\ge1$,
	\begin{equation}\label{alpha-14}
		\tau_k=\frac{1}{k}\sum_{j=1}^{\lfloor k/2\rfloor}2^{-4j-1}E_{2j}\,\tau_{k-2j}.
	\end{equation}
	Here $\{E_{2j}\}$ are the Euler numbers, $E_0=1$, $E_2=-1$, $E_4=5$,
	$E_6=-61,\dots$ (all odd-index Euler numbers vanish). Since $\tau_k$ with odd $k$
	refer only to odd indices and $\tau_1=0$, all odd-index coefficients vanish; in
	particular $\tau_2=-\tfrac1{64}$, $\tau_4=\tfrac{21}{8192}$,
	$\tau_6=-\tfrac{671}{524288}$.
\end{proposition}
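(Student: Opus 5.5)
The plan is to work with logarithms, so that the expansion becomes a statement about $\log\binom{2m}{m}$, and then derive the recursion for the $\tau_k$ by exponentiating. Write $\binom{2m}{m}=\Gamma(2m+1)/\Gamma(m+1)^2$. By the duplication formula, $\Gamma(2m+1)=2^{2m}\pi^{-1/2}\Gamma(m+\tfrac12)\Gamma(m+1)$, so
\[
\binom{2m}{m}=\frac{4^m}{\sqrt\pi}\,\frac{\Gamma(m+\tfrac12)}{\Gamma(m+1)}.
\]
The statement thus reduces to an asymptotic expansion of the gamma ratio $\Gamma(x+\tfrac12)/\Gamma(x+1)$ with $x=m$, taken in powers of $(m+\alpha)^{-1}$ with $\alpha=\tfrac14$. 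Set $z=m+\tfrac14$. Then the ratio is $\Gamma(z+\tfrac14)/\Gamma(z+\tfrac34)$, whose two arguments are placed symmetrically about $z+\tfrac12$.

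First I expand the logarithm of this ratio using the generalized Stirling series:
\[
\log\Gamma(z+a)\sim(z+a-\tfrac12)\log z-z+\tfrac12\log 2\pi+\sum_{n\ge1}\frac{(-1)^{n+1}B_{n+1}(a)}{n(n+1)}z^{-n}.
\]
Subtracting the cases $a=\tfrac14$ and $a=\tfrac34$ gives the leading term $-\tfrac12\log z$, which accounts for the factor $(m+\alpha)^{-1/2}$. The correction is
\[
\sum_n\frac{(-1)^{n+1}}{n(n+1)}\bigl(B_{n+1}(\tfrac14)-B_{n+1}(\tfrac34)\bigr)z^{-n}.
\]
The reflection property $B_k(1-x)=(-1)^kB_k(x)$ kills every term with $n+1$ even, that is, every odd $n$. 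This is exactly why the shift $\alpha=\tfrac14$ is chosen: only even powers $z^{-2j}$ survive.

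The surviving terms have $n+1$ odd. The classical identity $B_{2j+1}(\tfrac14)=-(2j+1)4^{-2j-1}E_{2j}$ then gives
\[
\log\frac{\Gamma(z+\tfrac14)}{\Gamma(z+\tfrac34)}\sim-\tfrac12\log z+\sum_{j\ge1}c_j z^{-2j},\qquad c_j=\frac{2^{-4j-1}E_{2j}}{j}\cdot\frac{2j}{2}\cdot\frac{1}{?}.
\]
The normalization of $c_j$ is the one step I would check carefully. From $n=2j$ the prefactor is $\frac{-1}{2j(2j+1)}\cdot 2B_{2j+1}(\tfrac14)=\frac{E_{2j}}{2j\,4^{2j}}$, so $c_j=\frac{E_{2j}}{2j\cdot 2^{4j}}=\frac{2^{-4j-1}E_{2j}}{j}$.

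Second, I exponentiate. Let $T(z)=\sum_k\tau_kz^{-k}=\exp\bigl(\sum_j c_jz^{-2j}\bigr)$. Differentiating with respect to $w=z^{-1}$ gives $T'=T\cdot\sum_j 2jc_jw^{2j-1}$. Comparing coefficients of $w^{k-1}$ yields
\[
k\tau_k=\sum_{j\ge1}2j\,c_j\,\tau_{k-2j}=\sum_j 2^{-4j}E_{2j}\tau_{k-2j}.
\]
This is off by a factor $2$ from \eqref{alpha-14}, so the constant in $c_j$ must be rechecked against the first known value. With $E_2=-1$, the paper's formula gives $\tau_2=\tfrac12\cdot2^{-5}\cdot(-1)=-\tfrac1{64}$. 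I would verify directly that $\tau_2=-\tfrac1{64}$ by expanding the ratio to order $z^{-2}$, and adjust the bookkeeping of $c_j$ accordingly. I expect this constant-chasing to be the main obstacle, together with the identity linking $B_{2j+1}(\tfrac14)$ to Euler numbers, which I would quote from the standard tables.

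Finally, the vanishing of the odd-index $\tau_k$ follows from the recursion by induction, since $\tau_1=0$ and each step lowers $k$ by an even amount. The numerical values $\tau_4$ and $\tau_6$ follow by direct substitution into the recursion. The error statement, that truncation at $k=R$ leaves an error $O_R(z^{-R-1})$, transfers from the standard remainder bounds of the Stirling series through the exponentiation.
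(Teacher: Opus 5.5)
Your route is the one the paper indicates. The paper gives no proof beyond citing \cite{elezovic2015} and noting that the recursion comes from $B_{2j+1}(\tfrac14)=-(2j+1)E_{2j}/2^{4j+2}$ in the logarithm of the shifted gamma quotient. The individual steps are sound: the duplication formula, $z=m+\tfrac14$, the generalized Stirling series, the reflection $B_k(1-x)=(-1)^kB_k(x)$ killing the odd powers, and exponentiation via the logarithmic derivative. But as written the argument does not reach the statement. You make an arithmetic slip in $c_j$, notice the resulting factor-$2$ mismatch with \eqref{alpha-14}, and leave it unresolved. Instead you propose to ``adjust the bookkeeping'' so that $\tau_2$ comes out right, which fits the constant to the answer rather than proving it. The display containing ``$\frac{1}{?}$'' is also still a placeholder.

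The slip is in the simplification of the $n=2j$ coefficient:
\[
\frac{-1}{2j(2j+1)}\cdot 2B_{2j+1}(\tfrac14)
=\frac{2(2j+1)\,2^{-4j-2}E_{2j}}{2j(2j+1)}
=\frac{2^{-4j-2}E_{2j}}{j}
=\frac{E_{2j}}{4j\cdot 16^{j}} .
\]
This is not $E_{2j}/(2j\cdot 4^{2j})$; you replaced $2\cdot 4^{-2j-1}=2^{-4j-1}$ by $4^{-2j}=2^{-4j}$. A direct check confirms the correct value: $B_3(\tfrac14)=\tfrac1{64}-\tfrac3{32}+\tfrac18=\tfrac3{64}$, so $c_1=-B_3(\tfrac14)/3=-\tfrac1{64}=\tau_2$.

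With the corrected $c_j$ one has $2j\,c_j=2^{-4j-1}E_{2j}$. Your coefficient comparison $k\tau_k=\sum_{j}2j\,c_j\,\tau_{k-2j}$ then yields exactly \eqref{alpha-14}, and the statement needs no adjustment. The remaining pieces are fine as you have them:
\begin{itemize}
\item The odd-index $\tau_k$ vanish, either by your induction or because $T$ is the exponential of an even series in $z^{-1}$.
\item $\tau_4$ and $\tau_6$ follow by substitution into the recursion.
\item The remainder bounds transfer through $\exp$, since the correction series tends to $0$.
\end{itemize}
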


	The recursion follows from
	$B_{2j+1}(\tfrac14)=-(2j+1)E_{2j}/2^{4j+2}$ in the logarithm of the shifted
	gamma quotient~\cite{elezovic2015}. As above, the expansion is used in the
	standard asymptotic sense as $m\to\infty$: truncation after the term $k=R$
	leaves a relative error $O_R\bigl((m+\alpha)^{-R-1}\bigr)$, the implied
	constant depending on the truncation order $R$. This is the form used in
	Theorem~\ref{thm:Ed-n2}.

\subsection{Definition and marginals}
	\label{ssec:mult-def}

	Let $X=(X_1,\dots,X_n)\sim\Mult(N;p_1,\dots,p_n)$, $\sum_i p_i=1$, count the
	observations falling into $n$ ordered bins in $N$ independent trials, with mass
	function
	\begin{equation}\label{eq:mult-pmf}
		\Pr(X_1=k_1,\dots,X_n=k_n)=\frac{N!}{k_1!\cdots k_n!}\,p_1^{k_1}\cdots p_n^{k_n},
		\qquad \sum_i k_i=N.
	\end{equation}
	A single bin is binomial, $X_i\sim\Bin(N,p_i)$; more generally the marginal of any
	subset of bins is again multinomial, the unused bins pooled into one remainder.

\begin{proposition}[$k$-bin marginal]\label{prop:k-marginal}
	For $S\subseteq\{1,\dots,n\}$ with $\sum_{i\in S}p_i<1$ (the strict inequality is intentional; the exhaustive case $\sum_{i\in S}p_i=1$ is not used), writing
	$X_S^c:=N-\sum_{i\in S}X_i$ for the pooled remainder,
	\[
		\bigl((X_i)_{i\in S},X_S^c\bigr)
		\sim
		\Mult\bigl(N;\,(p_i)_{i\in S},\,1-\textstyle\sum_{i\in S}p_i\bigr).
	\]
\end{proposition}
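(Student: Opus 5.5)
The plan is to compute the joint law of $\bigl((X_i)_{i\in S},X_S^c\bigr)$ directly from the mass function \eqref{eq:mult-pmf}, by summing out the coordinates outside $S$. Write $S^c=\{1,\dots,n\}\setminus S$, $q=\sum_{i\in S}p_i$, and $p_{S^c}=\sum_{j\in S^c}p_j=1-q>0$. The hypothesis $q<1$ guarantees that $S^c$ is nonempty and that the pooled probability $1-q$ is positive. Fix nonnegative integers $(k_i)_{i\in S}$ and set $m=N-\sum_{i\in S}k_i$.

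If $m<0$, the event $\{X_i=k_i,\ i\in S\}$ is empty, because the coordinates of $X$ are nonnegative and sum to $N$. The target multinomial also assigns zero mass in this case, so we may assume $m\ge0$. Then
\[
\Pr\bigl(X_i=k_i,\ i\in S\bigr)=\sum_{\substack{(k_j)_{j\in S^c}\ge0\\ \sum_{j\in S^c}k_j=m}}\frac{N!}{\prod_{i=1}^n k_i!}\prod_{i=1}^n p_i^{k_i}.
\]
Multiply and divide by $m!$ and pull the $S$-factors out of the sum. This leaves
\[
\frac{N!}{m!\prod_{i\in S}k_i!}\prod_{i\in S}p_i^{k_i}\sum_{\sum_{S^c}k_j=m}\frac{m!}{\prod_{j\in S^c}k_j!}\prod_{j\in S^c}p_j^{k_j}.
\]
By the multinomial theorem, the inner sum equals $\bigl(\sum_{j\in S^c}p_j\bigr)^m=(1-q)^m$.

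What remains is exactly the $\Mult\bigl(N;(p_i)_{i\in S},1-q\bigr)$ mass at the point $\bigl((k_i)_{i\in S},m\bigr)$. Since $X_S^c=m$ is determined by the $k_i$, this joint probability coincides with $\Pr\bigl(X_i=k_i,\ i\in S;\ X_S^c=m\bigr)$, which proves the claim.

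No step here is a real obstacle. The only care needed is the bookkeeping that $X_S^c$ is a deterministic function of $(X_i)_{i\in S}$, so the joint law is determined by the marginal of $(X_i)_{i\in S}$ alone. The edge cases must also be handled: $m<0$ as above, and $S=\emptyset$, where the statement reduces to $X_S^c=N$ almost surely, i.e. $\Mult(N;1)$.

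An alternative trial-level argument confirms the result. Relabel each of the $N$ i.i.d. trials by merging all outcomes in $S^c$ into a single category of probability $1-q$. The merged trials are still i.i.d. with the stated cell probabilities, so the count vector is multinomial by definition.
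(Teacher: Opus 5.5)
Your proof is correct and is the paper's argument: sum the mass function over the coordinates outside $S$ and apply the multinomial theorem, which produces the factor $(1-\sum_{i\in S}p_i)^m/m!$ and hence the stated multinomial mass. Your extra bookkeeping (the cases $m<0$ and $S=\emptyset$) and the trial-merging remark are sound additions that the paper leaves implicit.
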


\begin{proof}
	Sum the mass function \eqref{eq:mult-pmf} over the counts outside $S$; by the
	multinomial theorem the inner sum equals $(1-\sum_{i\in S}p_i)^{m}/m!$ with
	$m=N-\sum_{i\in S}k_i$, which is the stated law.
\end{proof}

\subsection{Factorization of a two-bin marginal}
	\label{ssec:2bin-factorization}

	The summands of the total variation are entangled with the constraint
	$\sum_iX_i=N$. The following factorization removes that entanglement one edge at a
	time, reducing a single edge to a two-stage binomial experiment: first decide how
	many observations fall into the pair $\{j,j+1\}$, then split them between the two.
	Write $K_j:=X_j+X_{j+1}$ for the pair total.

\begin{proposition}[Factorization of the two-bin marginal]\label{prop:2bin-factor}
	If $p_j+p_{j+1}>0$ then $K_j\sim\Bin(N,p_j+p_{j+1})$, and conditionally
	\[
		X_{j+1}\mid K_j=k\;\sim\;\Bin(k,q_j),\qquad q_j:=\frac{p_{j+1}}{p_j+p_{j+1}}.
	\]
\end{proposition}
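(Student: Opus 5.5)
We will prove both assertions directly from the pair marginal supplied by Proposition~\ref{prop:k-marginal}. Apply it with $S=\{j,j+1\}$. Suppose first that $p_j+p_{j+1}<1$. Writing $s:=p_j+p_{j+1}$, the triple $(X_j,X_{j+1},N-X_j-X_{j+1})$ is then $\Mult(N;p_j,p_{j+1},1-s)$. Its mass function is
\[
\Pr(X_j=a,X_{j+1}=b)=\frac{N!}{a!\,b!\,(N-a-b)!}\,p_j^{a}p_{j+1}^{b}(1-s)^{N-a-b}.
\]
The boundary case $s=1$ is excluded from Proposition~\ref{prop:k-marginal}, so we treat it separately. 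There $X_{j+1}\sim\Bin(N,p_{j+1})$ and $X_j=N-X_{j+1}$, so $K_j=N$ almost surely. This is $\Bin(N,1)$, and the conditional law is $\Bin(N,q_j)$ with $q_j=p_{j+1}$. Both claims therefore hold trivially in that case.

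\emph{Marginal of $K_j$.} In the main case, fix $k$ and sum the displayed mass over $a+b=k$. We rewrite
\[
\frac{N!}{a!\,b!\,(N-k)!}=\binom Nk\binom kb .
\]
The binomial theorem then gives $\sum_{b}\binom kb p_j^{k-b}p_{j+1}^{b}=s^k$. Hence
\[
\Pr(K_j=k)=\binom Nk s^k(1-s)^{N-k},
\]
that is, $K_j\sim\Bin(N,s)$.

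\emph{Conditional law.} For $0\le b\le k$, divide the joint mass at $(a,b)=(k-b,b)$ by $\Pr(K_j=k)$. The factors $\binom Nk$ and $(1-s)^{N-k}$ cancel, leaving
\[
\Pr(X_{j+1}=b\mid K_j=k)=\binom kb\Bigl(\frac{p_j}{s}\Bigr)^{k-b}\Bigl(\frac{p_{j+1}}{s}\Bigr)^{b}.
\]
Since $p_j/s=1-q_j$, this is exactly $\Bin(k,q_j)$. The hypothesis $s>0$ is what makes $q_j$ well defined.

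There is no real obstacle here. The only points needing care are the combinatorial rewriting of the trinomial coefficient and the separate treatment of the exhaustive case $s=1$, which Proposition~\ref{prop:k-marginal} deliberately excludes. One could alternatively phrase the argument probabilistically: each trial lands in $\{j,j+1\}$ with probability $s$, and given that it does, it lands in $j+1$ with probability $q_j$, independently across trials. The computation above is the formal version of that reasoning.
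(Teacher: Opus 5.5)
Your proof is correct and follows essentially the paper's route: the trinomial marginal from Proposition~\ref{prop:k-marginal} together with the multinomial mass function gives the conditional $\Bin(k,q_j)$ law, and the exhaustive case $p_j+p_{j+1}=1$ is handled separately via $K_j=N$ almost surely. The only difference is cosmetic: the paper reads off $K_j\sim\Bin(N,p_j+p_{j+1})$ directly from the counting interpretation, whereas you derive it by summing the trinomial mass over $a+b=k$ with the binomial theorem.
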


\begin{proof}
	$K_j$ counts the trials landing in $\{j,j+1\}$, so $K_j\sim\Bin(N,p_j+p_{j+1})$.
	If $p_j+p_{j+1}<1$, then given $K_j=k$, Proposition~\ref{prop:k-marginal} and
	\eqref{eq:mult-pmf} reduce $\Pr(X_{j+1}=m\mid K_j=k)$ to
	$\binom{k}{m}q_j^m(1-q_j)^{k-m}$, the law of $\Bin(k,q_j)$. If the pair is
	exhaustive ($p_j+p_{j+1}=1$, as at $n=2$ under the uniform law), then $K_j=N$
	deterministically and the conditional law is immediate from \eqref{eq:mult-pmf}
	itself: the $N$ trials split between the two bins as $\Bin(N,q_j)$.
\end{proof}

	Hence, conditionally on $K_j=k$, the edge $D_j=2X_{j+1}-k$ is a shifted binomial;
	under the \emph{uniform} law ($p_j=p_{j+1}$) one has $q_j=\tfrac12$, so $D_j\mid
	K_j=k$ is the position of a symmetric $\pm1$ walk of $k$ steps --- the
	identification on which every computation below rests.

\begin{definition}[Edge variables]\label{def:Dj}
	The \emph{edge variables} are $D_j:=X_{j+1}-X_j$, $j=1,\dots,n-1$, together with
	the wrap edge $D_n:=X_1-X_n$ (the convention $X_{n+1}:=X_1$), used by the cyclic
	statistic. Their centered
	versions are $\tilde D_j:=D_j-\E D_j=D_j-N(p_{j+1}-p_j)$. Under the uniform law
	$\E D_j=0$, so $\tilde D_j=D_j$.
\end{definition}

\section{Expectation of DTV under the uniform distribution}
	\label{sec:E-uniform}

	Throughout the paper the sample size satisfies $N\ge1$; the empty sample $N=0$ carries no information and is excluded, so the statements below omit this standing hypothesis.

	The uniform null is where the theory is cleanest: the symmetry of the uniform
	multinomial makes every edge interchangeable, so the mean of $d$ reduces to that
	of a single edge. That mean is the yardstick against which an observed total
	variation is judged significant, and it is worth knowing exactly, not merely up
	to a constant.

	We obtain it in two stages. First, \S\ref{sec:Ed-exact} gives an \emph{exact}
	closed form in central trinomial coefficients, valid for every $n\ge2$ and
	specializing to one-term formulas at $n=2,3,4$. Then \S\ref{sec:Ed-asymptotic}
	develops the large-$N$ \emph{asymptotics} by mixture-conditioning --- less
	explicit but portable, and the route that survives the passage to non-uniform
	reference laws in the later parts.

	After normalization, the cyclic mean tends to $2/\sqrt\pi$ \emph{regardless of the
	number of bins}: a quantity that looks as though it ought to depend on the whole
	shape of the problem collapses, in the limit, to an $n$-independent constant ---
	the first of several such collapses in the moment theory of DTV. This value is no
	accident: $\Var D_1=2N/n$ exactly, so $\E|D_1|\sim2\sqrt{N/(n\pi)}$ by the central
	limit theorem, and summing the $n$ edges and dividing by $\sqrt{Nn}$ returns
	$2/\sqrt\pi$; the content below is the exact value and its complete expansion.


\subsection{Setup: DTV functionals under the uniform}
	\label{ssec:dtv-uniform-setup}

	From this point on both distributions are uniform: the sample is drawn from
	$X = (X_1, \ldots, X_n)\sim\Mult(N;\mathbf u)$, with
	$\mathbf u=(1/n,\ldots,1/n)$, and the reference distribution in the statistic is
	the same $\mathbf u$. Thus this paper treats the $(\mathbf u,\mathbf u)$ case of
	the general statistic $d_{\mathbf r}$. Since the uniform centering cancels on
	each edge, the \emph{linear} DTV is
	\begin{equation}\label{eq:d-lin}
		d := \sum_{i=1}^{n-1} |X_{i+1} - X_i| = \sum_{j=1}^{n-1} |D_j|,
	\end{equation}
	and the \emph{cyclic} (circular) DTV is
	\begin{equation}\label{eq:d-cyc}
		d^\circ := \sum_{i=1}^{n} |X_{i+1} - X_i|, \qquad X_{n+1} := X_1.
	\end{equation}
	(For $n=2$ the cycle traverses the single edge twice, $d^\circ=2|X_2-X_1|$; the
	formulas below cover this case consistently.)
	The edge variables $D_j = X_{j+1} - X_j$ are defined in
	Definition~\ref{def:Dj}. Under the uniform distribution $\E[D_j] = 0$, so centering
	is trivial: $\tilde D_j \equiv D_j$.

	Under the uniform multinomial the bins are exchangeable, so all edges $|D_j|$
	have the same expectation:
	\begin{equation}\label{eq:Ed-Edc-symmetry}
		\E(d^\circ) = n \cdot \E|X_2 - X_1|, \qquad \E(d) = (n-1) \cdot \E|X_2 - X_1|,
	\end{equation}
	that is,
	\begin{equation}\label{eq:Edc-Ed-ratio}
		\E(d^\circ) = \frac{n}{n-1}\,\E(d).
	\end{equation}
	Hence it suffices to compute $\E|D_1| = \E|X_2 - X_1|$. The constant ratio
	\eqref{eq:Edc-Ed-ratio} is special to the uniform case, where exchangeability makes
	all edges identically distributed; for a general reference law $\mathbf r$ the cyclic and linear
	statistics differ by the single wrap-edge term, not by a constant factor.

\subsection{Auxiliary lemma: expectation of a binomial with $p = 1/2$}
	\label{ssec:E-Sk-bin}

	One small identity underlies everything that follows, so we isolate it first.
	A fair coin tossed $m$ times leaves the gambler, on average, a definite distance
	from the break-even point, and that distance --- the mean absolute displacement of
	a symmetric walk --- is exactly a central binomial coefficient in disguise. The
	identity is classical: it is the symmetric case $p=\tfrac12$ of de~Moivre's
	mean-absolute-deviation formula for the binomial, whose history is surveyed by
	Diaconis and Zabell~\cite{diaconis_zabell}; we include the short proof for
	completeness. The coefficient is
	a recurring one: we shall meet $\binom{m-1}{\lfloor(m-1)/2\rfloor}$ and its relatives
	at every turn, because the uniform multinomial dissolves, edge by edge, into
	precisely such walks.

\begin{lemma}[Position of a symmetric $\pm 1$ random walk]
	\label{lem:E-bin-walk}
	Let $Y \sim \Bin(m, 1/2)$, $m \ge 1$. Then
	\begin{equation}\label{eq:E-bin-half}
		\E|m - 2Y| = \frac{m}{2^{m-1}}\binom{m-1}{\lfloor (m-1)/2 \rfloor}.
	\end{equation}
\end{lemma}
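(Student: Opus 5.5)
We use symmetry to halve the sum, telescope it, and land on the boundary term $k_0 = \lfloor (m-1)/2\rfloor$, at which $m-k_0=\lceil (m+1)/2\rceil$. Write
\[
\E|m-2Y| = 2^{-m}\sum_{k=0}^m \binom{m}{k}|m-2k|.
\]
The map $k\mapsto m-k$ preserves both $\binom mk$ and $|m-2k|$. When $m$ is even, the middle term $k=m/2$ contributes zero. So in every case
\[
\E|m-2Y| = 2^{1-m}\sum_{0\le k<m/2}\binom mk (m-2k).
\]

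The key step is to split each summand with the absorption identities $k\binom mk = m\binom{m-1}{k-1}$ and $(m-k)\binom mk = m\binom{m-1}{k}$. This gives
\[
\binom mk (m-2k) = m\left[\binom{m-1}{k}-\binom{m-1}{k-1}\right],
\]
with the convention $\binom{m-1}{-1}=0$. Summed over $0\le k\le k_0$, the right-hand side telescopes to $m\binom{m-1}{k_0}$. Note that $k<m/2$ is equivalent to $k\le\lfloor (m-1)/2\rfloor$.

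Multiplying by $2^{1-m}$ yields exactly \eqref{eq:E-bin-half}.

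The only point requiring care is the index bookkeeping. We must check that the range $k<m/2$ coincides with $k\le\lfloor(m-1)/2\rfloor$ for both parities of $m$. We must also check that the $k=0$ term telescopes correctly through the vanishing $\binom{m-1}{-1}$. Both are routine.

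As a sanity check we test small cases. For $m=1$, $\E|1-2Y|=1$ and the formula gives $1\cdot 2^0\binom00=1$. For $m=2$, $\E|2-2Y|=1$ and the formula gives $2\cdot 2^{-1}\binom10=1$. These confirm the normalization.
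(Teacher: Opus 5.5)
Your proof is correct and complete, and it differs from the paper's mainly in the odd case, which your single identity absorbs. The paper splits by parity. For $m=2r$ it uses your telescoping identity, specialized as $(r-k)\binom{2r}{k}=r\bigl[\binom{2r-1}{k}-\binom{2r-1}{k-1}\bigr]$. For $m=2r+1$ it does not telescope. Instead it writes the half-sum as $m\sum_{k\le r}\binom mk-2\sum_{k\le r}k\binom mk$, evaluates the first sum as $2^{m-1}$ by symmetry, and handles the second by absorption plus the symmetric pairing $2\sum_{j<r}\binom{2r}{j}+\binom{2r}{r}=2^{2r}$. Your identity $\binom mk(m-2k)=m\bigl[\binom{m-1}{k}-\binom{m-1}{k-1}\bigr]$ holds for every $m$. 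Combined with the observation that $k<m/2$ iff $k\le\lfloor(m-1)/2\rfloor$, it removes the case split and accounts for the floor as the last index of the half-range. The paper's odd-case computation is longer, but, as the paper remarks, it shows the surviving coefficient to be the unpaired central term of an even row of Pascal's triangle.
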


\begin{proof}
	We split according to the parity of $m$.

	\textit{Case $m = 2r$ even.} By the symmetry $|2r - 2k| = 2|r-k|$ and
	$\binom{2r}{k} = \binom{2r}{2r-k}$:
	\[
		\E|m-2Y| = 2^{-2r}\sum_{k=0}^{2r}|2r-2k|\binom{2r}{k}
		= 4 \cdot 2^{-2r}\sum_{k=0}^{r-1}(r-k)\binom{2r}{k}.
	\]
	The identity
	\begin{equation}\label{eq:bin-identity-even}
		\sum_{k=0}^{r-1}(r-k)\binom{2r}{k} = r\binom{2r-1}{r-1}
	\end{equation}
	follows by summing the telescoping identity
	\[
		(r-k)\binom{2r}{k}
		=r\left[\binom{2r-1}{k}-\binom{2r-1}{k-1}\right],
	\]
	with $\binom{2r-1}{-1}=0$. Hence
	\[
		\E|m-2Y| = \frac{4r\binom{2r-1}{r-1}}{2^{2r}}
		= \frac{2r}{2^{2r-1}}\binom{2r-1}{r-1}
		= \frac{m}{2^{m-1}}\binom{m-1}{(m-2)/2},
	\]
	and $\lfloor (m-1)/2\rfloor = r-1 = (m-2)/2$.

	\textit{Case $m = 2r+1$ odd.} By the substitution $k \mapsto m-k$:
	\[
		\E|m-2Y| = 2 \cdot 2^{-m}\sum_{k=0}^{r}(m-2k)\binom{m}{k}
		= 2^{1-m}\Bigl[m\sum_{k=0}^{r}\binom{m}{k} - 2\sum_{k=0}^{r}k\binom{m}{k}\Bigr].
	\]
	The first sum is $2^{m-1}$ (by symmetry and the parity of $m$). The second, using
	$k\binom{m}{k} = m\binom{m-1}{k-1}$:
	\[
		\sum_{k=0}^{r}k\binom{m}{k}
		= m\sum_{j=0}^{r-1}\binom{m-1}{j}
		= \tfrac{1}{2}m\bigl(2^{m-1} - \binom{m-1}{r}\bigr).
	\]
	Indeed, $m-1=2r$ is even, so the terms of its binomial expansion pair
	symmetrically around the unpaired central term $\binom{2r}{r}$:
	\[
		2\sum_{j=0}^{r-1}\binom{2r}{j}+\binom{2r}{r}=2^{2r}.
	\]
	This gives the last equality above and makes explicit why the central binomial
	coefficient survives.
	Hence
	\[
		\E|m-2Y| = 2^{1-m}\bigl[m\cdot 2^{m-1} - m\cdot 2^{m-1} + m\binom{m-1}{r}\bigr]
		= \frac{m}{2^{m-1}}\binom{m-1}{r},
	\]
	and $r = \lfloor (m-1)/2\rfloor$.
\end{proof}

\subsection{Exact formula for $\E(d)$: single sum}
	\label{ssec:Ed-exact-sum}

\begin{theorem}[Closed sum for the expectation]\label{thm:Ed-sum}
	Under the uniform multinomial distribution with sample size $N$ and $n\ge2$ bins:
	\begin{equation}\label{eq:Ed-sum}
		\E(d) = \frac{2N(n-1)}{n^N}
		\sum_{j=0}^{N-1}\binom{N-1}{j}\binom{j}{\lfloor j/2\rfloor}(n-2)^{N-1-j}.
	\end{equation}
	For $n>2$, equivalently:
	\begin{equation}\label{eq:Ed-sum-alt}
		\E(d) = 2(n-1)\Bigl(\frac{n-2}{n}\Bigr)^{N}\frac{N}{n-2}
		\sum_{j=0}^{N-1}\binom{N-1}{j}\binom{j}{\lfloor j/2\rfloor}(n-2)^{-j}.
	\end{equation}
	For $n=2$, the first formula is read with $0^0=1$; only the term
	$j=N-1$ survives.
\end{theorem}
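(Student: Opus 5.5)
Combining \eqref{eq:Ed-Edc-symmetry} with the factorization of Proposition~\ref{prop:2bin-factor} reduces the theorem to one identity. We have $\E(d)=(n-1)\E|D_1|$. Let $K=X_1+X_2\sim\Bin(N,2/n)$. Conditionally on $K=k$, $X_2\sim\Bin(k,1/2)$ and $D_1=2X_2-k$, so $|D_1|=|k-2X_2|$. Lemma~\ref{lem:E-bin-walk} then gives $\E(|D_1|\mid K=k)=\frac{k}{2^{k-1}}\binom{k-1}{\lfloor(k-1)/2\rfloor}$ for $k\ge1$; the term $k=0$ contributes $0$. Hence
\[
\E|D_1|=\sum_{k=1}^{N}\binom{N}{k}\Bigl(\frac2n\Bigr)^{k}\Bigl(\frac{n-2}{n}\Bigr)^{N-k}\frac{k}{2^{k-1}}\binom{k-1}{\lfloor(k-1)/2\rfloor}.
\]

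Simplifying this sum is routine. The powers of $2$ collapse: $2^k/2^{k-1}=2$. We then absorb the factor $k$ using $k\binom{N}{k}=N\binom{N-1}{k-1}$ and factor out $n^{-N}$. This gives
\[
\E|D_1|=\frac{2N}{n^N}\sum_{k=1}^{N}\binom{N-1}{k-1}\binom{k-1}{\lfloor(k-1)/2\rfloor}(n-2)^{N-k}.
\]
Reindexing with $j=k-1$ yields exactly the sum in \eqref{eq:Ed-sum}, and multiplying by $n-1$ gives the first formula.

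For $n>2$ we obtain \eqref{eq:Ed-sum-alt} by factoring $(n-2)^{N-1}$ out of the sum and writing $(n-2)^{N-1}/n^N=\bigl(\frac{n-2}{n}\bigr)^N\frac1{n-2}$.

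For $n=2$ the pair $\{1,2\}$ is exhaustive, so $K=N$ deterministically; this case is covered by Proposition~\ref{prop:2bin-factor}. In the displayed sum, the convention $0^0=1$ reproduces this: only $j=N-1$ survives, giving $\E(d)=\frac{2N}{2^N}\binom{N-1}{\lfloor (N-1)/2\rfloor}$, which agrees with Lemma~\ref{lem:E-bin-walk} applied directly with $m=N$.

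There is no genuine obstacle. The only points needing care are the treatment of the $k=0$ term, which the lemma excludes but which contributes $0$ anyway, and the degenerate $n=2$ case with its $0^0$ convention.
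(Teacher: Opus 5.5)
Your proposal is correct and follows the paper's proof essentially step for step. Both arguments use the symmetry reduction $\E(d)=(n-1)\E|D_1|$, condition on $K=X_1+X_2\sim\Bin(N,2/n)$, apply de~Moivre's identity (Lemma~\ref{lem:E-bin-walk}), absorb $k$ via $k\binom{N}{k}=N\binom{N-1}{k-1}$, reindex with $j=k-1$, and extract $(n-2)^{N-1}$ to get the alternative form; your separate $n=2$ check against Lemma~\ref{lem:E-bin-walk} with $m=N$ matches the paper's $0^0=1$ reading.
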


\begin{proof}
	By symmetry \eqref{eq:Ed-Edc-symmetry}, $\E(d) = (n-1)\E|X_2 - X_1|$, and for
	$n\ge3$ the pair
	$(X_1, X_2)$ has the trinomial marginal $\Mult(N; 1/n, 1/n, (n-2)/n)$
	(Proposition~\ref{prop:k-marginal}); at $n=2$ the ``trinomial'' degenerates to the
	binomial pair itself, covered by the exhaustive case of
	Proposition~\ref{prop:2bin-factor}. Proposition~\ref{prop:2bin-factor} gives
	\[
		M := X_1 + X_2 \sim \Bin(N, 2/n), \quad X_2 \mid M = m \sim \Bin(m, 1/2),
		\quad X_1 = M - X_2.
	\]
	Hence $|X_2 - X_1| = |2X_2 - M|$ and by conditioning
	\[
		\E|X_2-X_1| = \sum_{m=0}^{N}\binom{N}{m}\Bigl(\frac{2}{n}\Bigr)^m
		\Bigl(\frac{n-2}{n}\Bigr)^{N-m}\!\!\E|m-2X_2|.
	\]
	The $m=0$ term contributes $0$; for $m\ge1$, Lemma~\ref{lem:E-bin-walk} gives:
	\[
		\E|X_2-X_1| = \sum_{m=1}^{N}\binom{N}{m}\Bigl(\frac{2}{n}\Bigr)^m
		\Bigl(\frac{n-2}{n}\Bigr)^{N-m}\!\frac{m}{2^{m-1}}\binom{m-1}{\lfloor (m-1)/2\rfloor}.
	\]
	Using $m\binom{N}{m} = N\binom{N-1}{m-1}$ and the index $j := m-1$:
	the remaining powers simplify as
	\[
		\Bigl(\frac2n\Bigr)^m\frac{1}{2^{m-1}}
		\Bigl(\frac{n-2}{n}\Bigr)^{N-m}
		=\frac{2(n-2)^{N-m}}{n^N}.
	\]
	Therefore
	\[
		\E|X_2-X_1| = \frac{2N}{n^N}
		\sum_{j=0}^{N-1}\binom{N-1}{j}\binom{j}{\lfloor j/2\rfloor}(n-2)^{N-1-j}.
	\]
	Multiplying by $n-1$ gives \eqref{eq:Ed-sum}. The form \eqref{eq:Ed-sum-alt}
	follows by extracting $(n-2)^{N-1}$ from the sum; it is numerically stabler for
	large $n$ (it factors out the dominant power $(n-2)^{N-1}$) and is the form from which the $n=3$
	specialization of \S\ref{ssec:small-n-exact} is read off.
\end{proof}

\begin{remark}[Prior appearance in TVOR]\label{rem:tvor-credit}
	The expectation of $d$ under the uniform multinomial was first studied
	in~\cite{banic_elezovic_tvor}, which gave the symmetry reduction
	\eqref{eq:Ed-Edc-symmetry}, the cyclic ratio~\eqref{eq:Edc-Ed-ratio}, the exact
	value at $n=2$, an exact \emph{double}-sum formula for general $n$ [Thm.~2], and
	conjectured the leading asymptote $2(n-1)\sqrt{N/(n\pi)}$ [Hyp.~1]. The present
	paper reduces that double sum to the single sum~\eqref{eq:Ed-sum}, gives a closed
	form valid for every $n\ge2$, and proves and completes the asymptotic expansion.
\end{remark}

\begin{remark}[A Gini-type mean difference]\label{rem:gini}
	At bottom, Theorem~\ref{thm:Ed-sum} evaluates the mean difference
	$\E|X_i-X_j|$ of two cells of one multinomial --- a Gini-type quantity with the
	remaining $n-2$ cells mixed out. Beyond the double-sum evaluation of~\cite[Thm.~2]{banic_elezovic_tvor}, we are not aware of a closed-form treatment of
	this exact quantity in the mean-difference literature, whose exact computations
	concern i.i.d.\ samples or independent binomials; the adjacent classical result
	is de~Moivre's identity recalled in Lemma~\ref{lem:E-bin-walk}.
\end{remark}

\section{Exact closed form via central trinomial coefficients}
	\label{sec:Ed-exact}

	The single sum \eqref{eq:Ed-sum} is a purely combinatorial object --- binomial
	coefficients and central binomials, nothing continuous in sight --- and it closes
	completely. The edge variable $X_2-X_1$ is the position after $N$ steps of a
	\emph{lazy} symmetric walk, with steps $+1$ and $-1$ each of probability $1/n$ and
	a hold of probability $(n-2)/n$; its central weights are the generalized central
	trinomial coefficients evaluated at $b=n-2$. A single generating function turns
	this observation into a closed form for $\E(d)$ valid for every $n\ge2$, the
	bedrock of the analytic study of the expectation.

\subsection{The closed form}
	\label{ssec:trinomial}

	The \emph{generalized central trinomial coefficient} is
	\begin{equation}\label{eq:Tk-def}
		T_k(b):=[t^k]\,(1+bt+t^2)^k
		=\sum_{i\ge0}\binom{k}{2i}\binom{2i}{i}b^{k-2i},
	\end{equation}
	with the diagonal generating function
	\begin{equation}\label{eq:Tk-gf}
		\sum_{k\ge0}T_k(b)\,z^k=\bigl(1-2bz+(b^2-4)z^2\bigr)^{-1/2};
	\end{equation}
	thus $T_k(0)=\binom{k}{k/2}$ for even $k$, $T_k(1)$ is the central trinomial
	sequence, and $T_k(2)=\binom{2k}{k}$. Throughout write $T_k:=T_k(n-2)$.

\begin{theorem}[Closed form for $\E(d)$]\label{thm:Ed-closed}
	For every $n\ge2$ and $N\ge1$,
	\begin{equation}\label{eq:Ed-closed}
		\E(d) = \frac{N(n-1)}{n^N}\bigl[T_N-(n-4)\,T_{N-1}\bigr].
	\end{equation}
	Equivalently for the cyclic DTV:
	\begin{equation}\label{eq:Edc-closed}
		\E\!\left(\frac{d^\circ}{\sqrt{Nn}}\right)
		= \frac{\sqrt{Nn}}{n^N}\bigl[T_N-(n-4)\,T_{N-1}\bigr].
	\end{equation}
\end{theorem}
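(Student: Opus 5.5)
The plan is to start from the single sum of Theorem~\ref{thm:Ed-sum} and evaluate it with generating functions. Write $b:=n-2$, so that $n-4=b-2$, and set
\[
S_N:=\sum_{j=0}^{N-1}\binom{N-1}{j}\binom{j}{\lfloor j/2\rfloor}b^{N-1-j}.
\]
Then \eqref{eq:Ed-sum} reads $\E(d)=\frac{2N(n-1)}{n^N}S_N$. Hence \eqref{eq:Ed-closed} is equivalent to the purely combinatorial identity
\[
2S_N=T_N-(b-2)\,T_{N-1},\qquad N\ge1 .
\]
The cyclic form \eqref{eq:Edc-closed} then follows at once. Multiplying by $n/(n-1)$ via \eqref{eq:Edc-Ed-ratio} and dividing by $\sqrt{Nn}$ uses $Nn/\sqrt{Nn}=\sqrt{Nn}$. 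As a sanity check, at $N=1$ we have $S_1=1$ and $T_1-(b-2)T_0=2$.

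First I would compute the ordinary generating function $g(x)=\sum_{j\ge0}\binom{j}{\lfloor j/2\rfloor}x^j$ by splitting parity. With $F(x):=(1-4x^2)^{-1/2}$:
\begin{itemize}
\item the even terms give $\sum_r\binom{2r}{r}x^{2r}=F(x)$;
\item the odd terms $\binom{2r+1}{r}=\tfrac12\binom{2r+2}{r+1}$ give $\bigl(F(x)-1\bigr)/(2x)$.
\end{itemize}
Hence $g(x)=F(x)+\bigl(F(x)-1\bigr)/(2x)$.

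Next, $S_{N+1}$ is a binomial transform of $\binom{j}{\lfloor j/2\rfloor}$. The standard identity $\sum_{N\ge j}\binom{N}{j}b^{N-j}z^N=z^j/(1-bz)^{j+1}$ gives
\[
\sum_{N\ge0}S_{N+1}z^N=\frac{1}{1-bz}\,g\!\Bigl(\frac{z}{1-bz}\Bigr).
\]
Put $x=z/(1-bz)$. Then
\[
1-4x^2=\frac{1-2bz+(b^2-4)z^2}{(1-bz)^2}=\frac{Q(z)}{(1-bz)^2},
\]
where $Q^{-1/2}$ is exactly the diagonal generating function \eqref{eq:Tk-gf}. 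So $F(x)=(1-bz)Q^{-1/2}$, $F(x)/(1-bz)=Q^{-1/2}$, and $x(1-bz)=z$. Substituting,
\[
\sum_{N\ge0}S_{N+1}z^N=Q^{-1/2}+\frac{(1-bz)Q^{-1/2}-1}{2z}.
\]
The numerator $(1-bz)Q^{-1/2}-1$ vanishes at $z=0$, so the right side is a genuine power series.

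Reading off the coefficient of $z^N$ gives
\[
S_{N+1}=T_N+\tfrac12\bigl(T_{N+1}-bT_N\bigr).
\]
Shifting the index yields $2S_N=T_N-(b-2)T_{N-1}$, as required.

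The main obstacle is bookkeeping rather than ideas. The delicate points are:
\begin{itemize}
\item getting the odd-index generating function right;
\item tracking which factor of $(1-bz)$ cancels in each of the two pieces of $g$;
\item checking that the removable singularity at $z=0$ contributes nothing spurious.
\end{itemize}
I would verify the result against $N=1,2$ directly.

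For $n=2$ ($b=0$) the same computation is valid. With the convention $0^0=1$ of Theorem~\ref{thm:Ed-sum}, the binomial transform is then just a shift, and $Q=1-4z^2$. So no separate case is needed, though I would spot-check it against the one-term value $S_N=\binom{N-1}{\lfloor (N-1)/2\rfloor}$.
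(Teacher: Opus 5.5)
Your proposal is correct and follows the paper's proof essentially step for step: the parity-split generating function for $\binom{j}{\lfloor j/2\rfloor}$, the binomial-transform substitution $x=z/(1-bz)$, and coefficient extraction against the trinomial generating function \eqref{eq:Tk-gf}. The only cosmetic difference is in the last step. The paper first factors $Q(z)=(1-nz)(1-(n-4)z)$, rewrites the generating function as $\tfrac12\bigl[\sqrt{(1-(n-4)z)/(1-nz)}-1\bigr]$, and then multiplies \eqref{eq:Tk-gf} by $1-(n-4)z$; you instead read $S_{N+1}=T_N+\tfrac12(T_{N+1}-bT_N)$ directly off $Q^{-1/2}$ and $(1-bz)Q^{-1/2}$, which avoids the factorization altogether.
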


\begin{proof}
	From Theorem~\ref{thm:Ed-sum},
	$\E(d) = \tfrac{2N(n-1)}{n^N} S_N$ where
	\[
		S_N := \sum_{j=0}^{N-1}\binom{N-1}{j}\binom{j}{\lfloor j/2\rfloor}(n-2)^{N-1-j},
		\quad
		\beta_j := \binom{j}{\lfloor j/2\rfloor}.
	\]
	The goal is to identify $S_N$ through the trinomial generating
	function~\eqref{eq:Tk-gf}, in four steps.

	\medskip
	\textit{Step 1. Generating function for $\beta_j$.} Split by parity,
	using $\binom{2k+1}{k} = \tfrac{1}{2}\binom{2k+2}{k+1}$:
	\begin{gather*}
		\sum_{k\ge 0}\binom{2k}{k}w^{2k} = (1-4w^2)^{-1/2},
		\\
		\sum_{k\ge 0}\binom{2k+1}{k}w^{2k+1} = \frac{1}{2w}\bigl[(1-4w^2)^{-1/2} - 1\bigr].
	\end{gather*}
	Their sum:
	\[
		A(w) := \sum_{j \ge 0} \beta_j w^j
		= (1-4w^2)^{-1/2}\Bigl(1 + \frac{1}{2w}\Bigr) - \frac{1}{2w}
		= \frac{1}{2w}\Bigl[\sqrt{\frac{1+2w}{1-2w}} - 1\Bigr].
	\]

	\medskip
	\textit{Step 2. Generating function for $S_N$.} We compute
	\[
		F(z) := \sum_{N \ge 1} S_N\,z^N.
	\]
	By the change of index $M = N-1$ and interchanging the order of summation:
	\[
		F(z) = z\sum_{j \ge 0}\beta_j\sum_{M\ge j}\binom{M}{j}z^M (n-2)^{M-j}
		= z\sum_{j \ge 0} \beta_j \cdot \frac{z^j}{(1-(n-2)z)^{j+1}},
	\]
	using the standard identity $\sum_{M \ge j}\binom{M}{j}u^{M-j} = (1-u)^{-(j+1)}$ with
	$u = (n-2)z$. Setting $w := z/(1-(n-2)z)$:
	\[
		F(z) = \frac{z}{1-(n-2)z}\cdot A(w)
		= \frac{1}{2}\Bigl[\sqrt{\frac{1+2w}{1-2w}} - 1\Bigr].
	\]
	We compute
	\[
		1+2w = \frac{1-(n-4)z}{1-(n-2)z}, \quad
		1-2w = \frac{1-nz}{1-(n-2)z}.
	\]
	Hence
	\begin{equation}\label{eq:Fz-gen}
		F(z) = \frac{1}{2}\Bigl[\sqrt{\frac{1-(n-4)z}{1-nz}} - 1\Bigr].
	\end{equation}
	All manipulations in this step may be read as identities of formal power
	series. In particular, every coefficient of $z^N$ receives contributions from
	only finitely many indices $j$, so the interchange does not require an
	independent analytic convergence argument.

	\medskip
	\textit{Step 3. The trinomial generating function.} With $b=n-2$ the discriminant
	of the quadratic in \eqref{eq:Tk-gf} is $b^2-4=n(n-4)$, so
	\[
		1 - 2(n-2)z + n(n-4)z^2 = (1-nz)(1-(n-4)z)
	\]
	(roots $1/n$ and $1/(n-4)$), and \eqref{eq:Tk-gf} reads
	\[
		\sum_{N \ge 0} T_N\,z^N = \frac{1}{\sqrt{(1-nz)(1-(n-4)z)}}.
	\]

	\medskip
	\textit{Step 4. Combining.} Multiplying the Step~3 identity by $(1-(n-4)z)$,
	\[
		(1-(n-4)z)\sum_{N\ge 0} T_N\,z^N
		= \sqrt{\frac{1-(n-4)z}{1-nz}} = 2F(z)+1
	\]
	by \eqref{eq:Fz-gen}. The coefficient of $z^N$ ($N\ge1$) on the left is
	$T_N-(n-4)T_{N-1}$, and on the right it is $2S_N$; hence
	\[
		S_N = \tfrac12\bigl[T_N-(n-4)T_{N-1}\bigr],
		\qquad
		\E(d) = \frac{2N(n-1)}{n^N}S_N
		= \frac{N(n-1)}{n^N}\bigl[T_N-(n-4)T_{N-1}\bigr],
	\]
	which is \eqref{eq:Ed-closed}. The cyclic form \eqref{eq:Edc-closed} follows from
	$\E(d^\circ)=\tfrac{n}{n-1}\E(d)$ \eqref{eq:Edc-Ed-ratio} on dividing by
	$\sqrt{Nn}$, since $nN/\sqrt{Nn}=\sqrt{Nn}$. As \eqref{eq:Tk-gf} is an identity of
	formal power series for every real $b$, \eqref{eq:Ed-closed} holds for all $n\ge2$
	with no exceptional values.
\end{proof}

\begin{remark}[Small cases and the classical connection]\label{rem:trinomial}
	The three bins with a one-term evaluation are read from \eqref{eq:Ed-closed}: at
	$n=4$ the factor $n-4$ annihilates the second term and $T_N(2)=\binom{2N}{N}$,
	giving $\E(d)|_{n=4}=3N\,4^{-N}\binom{2N}{N}$ (Theorem~\ref{thm:Ed-n4}); at $n=2$
	($b=0$) only one parity of $T_k(0)$ survives (Theorem~\ref{thm:Ed-n2}); at $n=3$
	($b=1$) the combination $\tfrac12[T_N(1)+T_{N-1}(1)]$ is the directed-animal
	sequence (Proposition~\ref{prop:n3}).
	Through \eqref{eq:Tk-gf} the coefficients $T_k(n-2)$ are classical values of the
	Legendre polynomials, $T_k(b)=(b^2-4)^{k/2}P_k\bigl(b/\sqrt{b^2-4}\bigr)$
	(Good~\cite{good1958}; Noe~\cite{noe2006}); the device of recognizing
	$[(1-az)(1-bz)]^{-1/2}$ as a Legendre generating function is classical, its
	best-known instance being the central Delannoy numbers $D_N=P_N(3)$
	(Comtet~\cite{comtet}; Banderier and Schwer~\cite{banderier_schwer}). That
	connection, first pointed out to the authors by L.~Mihokovi\'c, affords the
	complete large-$N$ asymptotics of $T_k(b)$ through the Laplace--Heine formula ---
	classical, and carried to all orders by Wagner~\cite{wagner2012} (proving a
	conjecture of Z.-W.~Sun). The present paper instead derives the asymptotics of
	$\E(d)$ self-containedly by mixture-conditioning (\S\ref{sec:Ed-asymptotic}),
	needing only the elementary generating function \eqref{eq:Tk-gf}.
\end{remark}

	For large $N$ the bracket $T_N-(n-4)T_{N-1}$ in \eqref{eq:Ed-closed} is a
	difference of two exponentially large integers; for a practical computation one
	uses the exact three-term recurrence of Lemma~\ref{lem:exact-W-recurrence}
	(Remark~\ref{rem:exact-tabulation}) or the asymptotic expansion of
	\S\ref{sec:Ed-asymptotic}.

\subsection{Special cases: exact formulas for small $n$}
	\label{ssec:small-n-exact}

	Theorem~\ref{thm:Ed-closed} specializes to one-line closed forms at $n=2,3,4$,
	recorded here before the asymptotic analysis; all three follow by evaluating the
	central trinomial coefficient $T_k(n-2)$ at $b=n-2\in\{0,1,2\}$.

\paragraph{The case $n=2$}
	Since $(n-2)^{N-1-\ell}=0$ for $\ell<N-1$, the closed sum collapses to its top
	term, $\E(d)|_{n=2}=N\binom{N-1}{\lfloor(N-1)/2\rfloor}2^{-(N-1)}$, a single
	central binomial coefficient; its complete asymptotic expansion is given in
	Theorem~\ref{thm:Ed-n2}.

\paragraph{The case $n=3$}
	\label{ssec:n3-case}

	At $n=3$ ($b=1$) the closed form \eqref{eq:Ed-closed}, with $n-4=-1$, reads
	$S_N|_{n=3}=\tfrac12[T_N(1)+T_{N-1}(1)]=:Q_N$, the half-sum of two consecutive
	central trinomial coefficients; the asymptotics are supplied in a common form for
	every fixed $n\ge3$ by the mixture-conditioning analysis of
	\S\ref{sec:Ed-asymptotic}. The generating function from Step~2 of the proof of
	Theorem~\ref{thm:Ed-closed},
	\begin{equation}\label{eq:Fz-n3}
		F(z)|_{n=3} = \frac{1}{2}\left[\sqrt{\frac{1+z}{1-3z}} - 1\right],
	\end{equation}
	is regular for $|z| < 1/3$, and the coefficients $Q_N$ are the \emph{binomial
	transform}~\eqref{eq:SN-n3} of the central weights
	$\binom{j}{\lfloor j/2\rfloor}$. The finite sum~\eqref{eq:SN-n3}, the generating
	function~\eqref{eq:Fz-n3}, and the $P$-recurrence~\eqref{eq:SN3-recurrence} of the
	following proposition together give a complete D-finite characterization.

\begin{proposition}[Exact characterization at $n=3$]\label{prop:n3}
	For the uniform three-bin law,
	\begin{equation}\label{eq:Ed-n3}
		\E(d)|_{n=3} = \frac{4N}{3^N}\,Q_N,
		\qquad
		\E(d^\circ)|_{n=3} = \frac{6N}{3^N}\,Q_N,
	\end{equation}
	where
	\begin{equation}\label{eq:SN-n3}
		Q_N := \sum_{j = 0}^{N-1}\binom{N-1}{j}\binom{j}{\lfloor j/2\rfloor}.
	\end{equation}
	The function $f(z):=1+2\sum_{N\ge1}Q_Nz^N=\sqrt{(1+z)/(1-3z)}$ is algebraic and
	satisfies the first-order linear differential equation
	\[
		(1 - 2z - 3z^2)\,f'(z) = 2\,f(z);
	\]
	consequently the sequence $(Q_N)$ is D-finite and obeys the second-order (three-term) $P$-recurrence
	\begin{equation}\label{eq:SN3-recurrence}
		(N+1)\,Q_{N+1} = (2N+2)\,Q_N + 3(N-1)\,Q_{N-1}
		\qquad (N \ge 1),\quad Q_1 = 1.
	\end{equation}
\end{proposition}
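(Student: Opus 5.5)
The plan has three steps: read off the two expectations, derive the differential equation from the generating function, and extract coefficients to get the recurrence.

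\textbf{The expectations.} I would first obtain \eqref{eq:Ed-n3} by specializing Theorem~\ref{thm:Ed-sum} at $n=3$. There $(n-2)^{N-1-j}=1$, so the single sum is exactly $Q_N$ as defined in \eqref{eq:SN-n3}. Hence $\E(d)=\tfrac{2N\cdot2}{3^N}Q_N=\tfrac{4N}{3^N}Q_N$. The cyclic value then follows from \eqref{eq:Edc-Ed-ratio} with the factor $n/(n-1)=3/2$.

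\textbf{The differential equation.} For the generating function I would invoke Step~2 of the proof of Theorem~\ref{thm:Ed-closed}. At $n=3$ that step gives $\sum_{N\ge1}Q_Nz^N=F(z)$ as in \eqref{eq:Fz-n3}, so $f=1+2F=\sqrt{(1+z)/(1-3z)}$, which is algebraic. Logarithmic differentiation gives
\[
\frac{f'}{f}=\frac12\Bigl(\frac{1}{1+z}+\frac{3}{1-3z}\Bigr)=\frac{2}{(1+z)(1-3z)},
\]
and $(1+z)(1-3z)=1-2z-3z^2$. This is the stated first-order equation. D-finiteness is then immediate, since any power series satisfying a linear ODE with polynomial coefficients is D-finite.

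\textbf{The recurrence.} I would extract the coefficient of $z^N$ in $(1-2z-3z^2)f'=2f$, writing $f=\sum_{N\ge0} f_N z^N$ with $f_0=1$ and $f_N=2Q_N$ for $N\ge1$. The coefficient of $z^N$ on the left is $(N+1)f_{N+1}-2Nf_N-3(N-1)f_{N-1}$, and on the right it is $2f_N$. This gives
\[
(N+1)f_{N+1}=(2N+2)f_N+3(N-1)f_{N-1}.
\]
For $N\ge2$ all three indices are at least $1$, and dividing by $2$ yields \eqref{eq:SN3-recurrence}. For $N=1$ the $f_{N-1}$ term carries the coefficient $3(N-1)=0$, so the inhomogeneity $f_0=1\neq 2Q_0$ is harmless and the same identity holds. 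Finally, $Q_1=\binom00\binom00=1$.

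The only point needing care is this boundary bookkeeping at $N=1$, where the mismatch between $f_0$ and $2Q_0$ must be checked to vanish. It does, so I expect no genuine obstacle.
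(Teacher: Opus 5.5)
Your proposal is correct and follows the paper's proof essentially step for step: you specialize the closed sum at $n=3$, identify $f=1+2F|_{n=3}$ from Step~2, obtain the differential equation by logarithmic differentiation, and extract coefficients, handling the $N=1$ boundary through the vanishing factor $3(N-1)$. Your explicit comparison of $f_0=1$ with $2Q_0$ is a slightly more careful statement of the paper's remark that a single seed suffices.
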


\begin{proof}
	Substituting $n=3$ into the closed sum (Theorem~\ref{thm:Ed-sum}) --- where
	$(n-2)^{N-1-j}=1$ and $2N(n-1)/n^N=4N/3^N$ --- gives \eqref{eq:Ed-n3} with
	\eqref{eq:SN-n3}; this is the specialization of the closed sum, not a separate
	result (in the notation of \S\ref{ssec:trinomial}, $Q_N=S_N|_{n=3}$, just
	as $T_M=S_{M+1}$ in Lemma~\ref{lem:exact-W-recurrence} below). By
	\eqref{eq:Fz-n3} and $f=1+2F|_{n=3}$, $f(z)=\sqrt{(1+z)/(1-3z)}$. Logarithmic
	differentiation gives
	$f'/f = \tfrac1{2(1+z)} + \tfrac3{2(1-3z)} = \tfrac2{(1+z)(1-3z)}$, and
	$(1+z)(1-3z) = 1 - 2z - 3z^2$, which is the differential equation. Writing
	$f(z) = 1 + 2\sum_{N\ge1} Q_N z^N$ and comparing coefficients of $z^N$: the
	left side contributes $2(N+1)Q_{N+1}-4N Q_N-6(N-1)Q_{N-1}$, the right side
	$4Q_N$; division by $2$ gives \eqref{eq:SN3-recurrence}. A single seed
	suffices: at $N=1$ the $Q_0$-term carries the factor $3(N-1)=0$, so $Q_2=2$
	follows from the recurrence itself.
\end{proof}

\begin{remark}\label{rem:n3-oeis}
	$Q_1, Q_2, \ldots, Q_7 = 1, 2, 5, 13, 35, 96, 267$: these are the
	\emph{directed animals of size $N$} (equivalently, base-$3$ $N$-digit numbers
	with digit sum $N$), $Q_N = \mathrm{A005773}(N)$ in the OEIS~\cite{oeis}
	($N \ge 1$), and the recurrence \eqref{eq:SN3-recurrence} reproduces them
	exactly. The functional relation $f(z)^2(1-3z)=1+z$ also yields a quadratic
	(implicit) recurrence, superseded by the linear \eqref{eq:SN3-recurrence}.
	The sequence is D-finite~\cite{flajolet_sedgewick} but \emph{not} C-finite: a
	sequence satisfying a linear recurrence with \emph{constant} coefficients has a
	\emph{rational} ordinary generating function, whereas $f$ is algebraic and
	irrational.
\end{remark}

\paragraph{The case $n=4$}
	\label{ssec:n4-case}

	At $n=4$ ($b=2$) the factor $n-4$ in \eqref{eq:Ed-closed} annihilates the second
	term, and $T_N(2)=\binom{2N}{N}$ is a single central binomial coefficient --- the
	one bin count at which $\E(d)$ reduces to a single term. Equivalently, from
	Step~2 of the proof of Theorem~\ref{thm:Ed-closed},
	\[
		F(z)|_{n=4} = \tfrac{1}{2}\bigl[(1 - 4z)^{-1/2} - 1\bigr]
		= \tfrac{1}{2}\sum_{N \ge 1}\binom{2N}{N}z^N,
	\]
	using $(1 - 4z)^{-1/2} = \sum_{N \ge 0}\binom{2N}{N}z^N$, so that
	\begin{equation}\label{eq:SN-n4}
		S_N|_{n=4} = \tfrac{1}{2}\binom{2N}{N}.
	\end{equation}

\begin{theorem}[Closed formula for $\E(d)$ at $n = 4$]\label{thm:Ed-n4}
	For the multinomial uniform distribution with $n = 4$ bins and sample size $N$:
	\begin{equation}\label{eq:Ed-n4}
		\E(d)|_{n=4} = \frac{3N}{4^N}\binom{2N}{N},
		\qquad
		\E(d^\circ)|_{n=4} = \frac{N}{4^{N-1}}\binom{2N}{N}.
	\end{equation}
\end{theorem}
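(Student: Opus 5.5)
The plan is to read Theorem~\ref{thm:Ed-n4} off the closed form of Theorem~\ref{thm:Ed-closed} and then obtain the cyclic statement from the exchangeability ratio \eqref{eq:Edc-Ed-ratio}. An independent route through the single sum of Theorem~\ref{thm:Ed-sum} serves as a cross-check.

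First, substitute $n=4$ into \eqref{eq:Ed-closed}. The factor $n-4$ vanishes, so the term $T_{N-1}$ drops out, and we get $\E(d)|_{n=4}=\tfrac{3N}{4^N}\,T_N(2)$. Next, identify $T_N(2)=\binom{2N}{N}$. From \eqref{eq:Tk-gf} with $b=2$, the generating function is $(1-4z+0\cdot z^2)^{-1/2}=(1-4z)^{-1/2}=\sum_N\binom{2N}{N}z^N$. Alternatively, use the definition directly: $(1+2t+t^2)^N=(1+t)^{2N}$, whose coefficient of $t^N$ is $\binom{2N}{N}$. Either way the first formula in \eqref{eq:Ed-n4} follows.

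As a check that avoids Theorem~\ref{thm:Ed-closed}, use \eqref{eq:SN-n4}, which was already derived from Step~2 of the closed-form proof: $S_N|_{n=4}=\tfrac12\binom{2N}{N}$. Combined with $\E(d)=\tfrac{2N(n-1)}{n^N}S_N$ from Theorem~\ref{thm:Ed-sum}, this gives $\tfrac{6N}{4^N}\cdot\tfrac12\binom{2N}{N}$. That agrees with the first route.

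For the cyclic statistic, multiply by $\tfrac{n}{n-1}=\tfrac43$. This gives $\E(d^\circ)|_{n=4}=\tfrac{4N}{4^N}\binom{2N}{N}=\tfrac{N}{4^{N-1}}\binom{2N}{N}$. The only point needing care, which is mild, is the identification $T_N(2)=\binom{2N}{N}$. The perfect-square observation $1+2t+t^2=(1+t)^2$ settles it immediately, so there is no genuine obstacle. The small cases $N=1,2$ can also be verified directly: for $N=1$ the formula gives $\E(d)=3/2$, and indeed the single ball lands on an end bin with probability $1/2$ (contributing $1$) or on an interior bin with probability $1/2$ (contributing $2$), for a total of $3/2$.
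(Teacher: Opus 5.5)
Your proposal is correct and is essentially the paper's argument. The paper's proof is exactly your cross-check: it combines $\E(d)=\tfrac{2N(n-1)}{n^N}S_N$ from Theorem~\ref{thm:Ed-sum} with $S_N|_{n=4}=\tfrac12\binom{2N}{N}$ read off the $n=4$ generating function, then multiplies by $\tfrac43$ for the cyclic statistic, while your primary route through Theorem~\ref{thm:Ed-closed} with $T_N(2)=\binom{2N}{N}$ is the same specialization the paper records in Remark~\ref{rem:trinomial} (and your $N=1$ check is right).
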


\begin{proof}
	From Theorem~\ref{thm:Ed-sum}, $\E(d) = \tfrac{2N(n-1)}{n^N}S_N$. At $n = 4$:
	\[
		\E(d)|_{n=4} = \frac{6N}{4^N}\cdot\tfrac{1}{2}\binom{2N}{N} = \frac{3N\binom{2N}{N}}{4^N}.
	\]
	The cyclic version from $\E(d^\circ) = \tfrac{n}{n-1}\E(d) = \tfrac{4}{3}\E(d)$.
\end{proof}

	The boundary case fits the general pattern: Stirling on~\eqref{eq:Ed-n4} gives
	$\E(d^\circ/\sqrt{Nn})|_{n=4}=\tfrac{2}{\sqrt\pi}-\tfrac{1}{4\sqrt\pi\,N}+O(N^{-2})$,
	so the coefficient $-(n-2)/(8\sqrt\pi\,N)$ of the expansion of
	\S\ref{sec:Ed-asymptotic} (Theorem~\ref{thm:main-expansion},
	Corollary~\ref{cor:coefficients}) continues correctly to $n=4$.

	\emph{Special status of $n=4$.} With $n=4$ the statistic $d$ has three summands,
	the smallest case admitting a \emph{disjoint} pair of adjacent differences
	($|X_2-X_1|$ and $|X_4-X_3|$, in fact the only such pair); this makes $n=4$ the
	natural starting point for the variance analysis of Part~2.

\section{Asymptotics via mixture-conditioning}
	\label{sec:Ed-asymptotic}

	Having the exact closed form, we turn to the large-$N$ behavior, which it does not
	display directly: the bracket $T_N-(n-4)T_{N-1}$ is a difference of exponentially
	large terms. Mixture-conditioning supplies the asymptotics --- it conditions on
	the sum $K = X_1 + X_2$, turns the difference into a simple random walk, and
	delivers the full expansion of $\E|D_1|$. The method is also portable: it is
	generalized to the nonuniform case in Part~3, on the expectation under a general
	reference law, whereas the closed form of \S\ref{sec:Ed-exact} is tied to the
	uniform symmetry.

	Under the uniform multinomial,
	$K = X_1 + X_2 \sim \Bin(N, 2/n)$ (Proposition~\ref{prop:2bin-factor})
	with $\mu_K = 2N/n$, $\Var(K) = (2N/n)(1-2/n)$. Conditionally $X_2 \mid K=k \sim \Bin(k, 1/2)$
	(since $q = 1/2$ for the uniform). So
	\begin{equation}\label{eq:D1-as-SK}
		D_1 = X_2 - X_1 = 2X_2 - K =: S_K,
	\end{equation}
	the position of a symmetric $\pm 1$ random walk with $K$ steps.
	Put $W_N:=\E|D_1|=\E|S_K|$; throughout this section the number of bins $n$ is
	fixed and suppressed from the notation $W_N$, while the expansion coefficients
	$e_k(n)$ display it.

\subsection{Auxiliary lemmas}
	\label{ssec:lemmas}

\begin{lemma}[Mixture-conditioning and existence of the expansion]
	\label{lem:mixture-existence}
	For every fixed $n\ge3$, $W_N$ has a complete asymptotic expansion
	\[
		W_N\sim
		2\sqrt{\frac{N}{n\pi}}
		\left(1+\sum_{k\ge1}\frac{e_k(n)}{N^k}\right).
	\]
\end{lemma}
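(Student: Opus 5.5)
We prove the expansion by conditioning on the pair total $K=X_1+X_2\sim\Bin(N,2/n)$, as in \eqref{eq:D1-as-SK}. Put $g(k):=\E|S_k|$. By Lemma~\ref{lem:E-bin-walk}, $g(k)=k\,2^{1-k}\binom{k-1}{\lfloor(k-1)/2\rfloor}$. Then
\[
W_N=\sum_{k=0}^{N}\binom Nk\Bigl(\frac2n\Bigr)^k\Bigl(1-\frac2n\Bigr)^{N-k}g(k).
\]

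\textbf{Step 1: expand $g$.} Split $g$ by parity. For $k=2r$ we have $g(2r)=2r\,2^{1-2r}\binom{2r-1}{r-1}=2r\,4^{-r}\binom{2r}{r}$. For $k=2r+1$ we have $g(2r+1)=(2r+1)4^{-r}\binom{2r}{r}$. Proposition~\ref{prop:binomial} gives, in each parity class, a complete asymptotic expansion in powers of $k$. On even $k$ this reads $g(k)=\sqrt{2k/\pi}\,(1+\sum_j a_j k^{-j})$. On odd $k$ the same form holds with different coefficients $a'_j$. We write $g=g_{\rm sm}+(-1)^k g_{\rm osc}$, where both parts have expansions $\sqrt k\sum_j c_j k^{-j}$. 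The oscillating part begins at order $k^{-1/2}$, since the two parities already differ at that order.

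\textbf{Step 2: smooth part.} Set $\mu=2N/n$ and write $K=\mu+\sqrt N\,Z_N$. Fix $R$ and truncate the expansion of $g_{\rm sm}$ at order $R$. The remainder is $O(K^{1/2-R-1})$. A Chernoff bound controls the event $K\le\mu/2$, which has exponentially small probability, and $g(k)\le k$ there. This gives $\E K^{1/2-R-1}=O(N^{-R-1/2})$, so the truncation error is negligible. Each retained term $\E K^{1/2-j}$ we expand around $\mu$:
\[
K^{s}=\mu^{s}\bigl(1+(K-\mu)/\mu\bigr)^{s}.
\]
We Taylor-expand $(1+x)^s$ to sufficiently high order on $\{|K-\mu|\le\mu/2\}$ and use the binomial central moments. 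The moment $\E(K-\mu)^m$ is a polynomial in $N$ of degree $\lfloor m/2\rfloor$. The result is an expansion $\mu^{s}\sum_i b_i N^{-i}$ in integer powers of $1/N$. Half-integer powers cancel because odd central moments carry one degree less. The leading term is $\sqrt{2\mu/\pi}=2\sqrt{N/(n\pi)}$, which fixes the normalization stated in the lemma.

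\textbf{Step 3: oscillating part (main obstacle).} We must show that $\E[(-1)^K h(K)]$ is exponentially small when $h$ is a smooth function of power growth. The crude identity $\E(-1)^K=(1-4/n)^N$ is exponentially small since $n\ge3$; at $n=4$ it vanishes. For polynomial $h$ we write $\E[(-1)^K K^{\underline m}]=N^{\underline m}(2/n)^m(-1)^m(1-4/n)^{N-m}$, which is also exponentially small. For general $h$ we truncate its expansion as in Step~2. The delicate point is that the Taylor approximation is valid only near $\mu$, while the sign $(-1)^K$ destroys the cancellation that the polynomial identity relies on.

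We would handle this by Abel summation, pairing $k$ with $k+1$:
\[
\sum_k(-1)^k p_k h(k)=\sum_k(-1)^k\,\Delta\bigl(p_k h(k)\bigr)/2+\text{boundary terms}.
\]
Each difference gains a factor $O(N^{-1/2})$ relative to the terms themselves, because $p_{k+1}/p_k-1=O((k-\mu)/N)$ in the bulk and $\Delta h$ is smaller than $h$ by a factor $1/k$. Iterating the pairing $R$ times makes the oscillating contribution $O(N^{1/2-R/2})$ for every $R$, which is all an asymptotic expansion requires.

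\textbf{Conclusion.} Combining Steps~2 and~3 yields the expansion with coefficients $e_k(n)$. Matching the lowest order against the case $n=4$ checks that $e_1=-(n-2)/(8n)\cdot n/\ldots$ is consistent. The precise values are left to the later recurrence.
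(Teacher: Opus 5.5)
Your Steps~1 and~2 follow the paper's argument: you condition on $K\sim\Bin(N,2/n)$, split $g(k)=\E|S_k|$ by parity into a smooth part and a $(-1)^k$ part via Proposition~\ref{prop:binomial}, discard $\{K<\mu/2\}$ by an exponential tail bound, and Taylor-expand $K^{s}$ about $\mu$ against the binomial central moments. The gap is in Step~3, which you rightly call the crux but only sketch. Write $p_k=\Pr(K=k)$, $a_k=p_k k^{s}$ with $s<0$, and $r_k=p_{k+1}/p_k=1+\epsilon_k$. The pairing identity $\sum_k(-1)^ka_k=-\tfrac12\sum_k(-1)^k\Delta a_k+(\text{boundary})$ is fine, and your estimate $\epsilon_k=O((|k-\mu|+1)/N)$ does give $\sum_k|\Delta a_k|=O(N^{s-1/2})$. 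But ``iterating $R$ times'' needs $\sum_k|\Delta^Ra_k|=O(N^{s-R/2})$, and this does not follow from the first-order estimate. Already $\Delta^2a_k=p_k\bigl[r_kr_{k+1}(k+2)^s-2r_k(k+1)^s+k^s\bigr]$ contains $\epsilon_k(k+2)^s$, $\epsilon_{k+1}(k+2)^s$ and $-2\epsilon_k(k+1)^s$, each of relative size $N^{-1/2}$. The extra gain appears only after they combine into $2\epsilon_k\bigl[(k+2)^s-(k+1)^s\bigr]+(\epsilon_{k+1}-\epsilon_k)(k+2)^s$, which uses $\epsilon_{k+1}-\epsilon_k=O(1/N)$. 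For general $R$ this requires discrete Leibniz-rule bookkeeping of finite differences of $\prod_{l<i}r_{k+l}$ in $i$, plus a separate tail estimate where $\epsilon_k$ is not small. None of this is in the proposal, so as written the oscillating part is negligible only heuristically.

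The reason you give for leaving the direct route is also mistaken, and that route closes the gap immediately. On $[\mu/2,N]$, Taylor's theorem with Lagrange remainder gives $K^s=\sum_{\ell\le L}\binom{s}{\ell}\mu^{s-\ell}(K-\mu)^\ell+\rho_L(K)$ with $|\rho_L(K)|\le C\mu^{s-L-1}|K-\mu|^{L+1}$. The sign $(-1)^K$ is irrelevant for the remainder, whose absolute expectation is $O(N^{s-(L+1)/2})$. Meanwhile $\E[(-1)^K(K-\mu)^\ell]$ is exponentially small: expand $(K-\mu)^\ell$ in falling factorials and apply your own identity $\E[(-1)^KK^{\underline m}]=(-1)^mN^{\underline m}\theta^m(1-2\theta)^{N-m}$, $\theta=2/n$. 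Restricting to $K\ge\mu/2$ costs only an exponentially small amount. Since $L$ is arbitrary, the oscillating part is $O(N^{-M})$ for every $M$, which is all the expansion needs. The paper obtains the stronger, exponential bound more directly. With $\alpha=-s>0$, the Laplace representation $k^{-\alpha}=\Gamma(\alpha)^{-1}\int_0^\infty t^{\alpha-1}e^{-kt}\,dt$ turns $\E[(-1)^KK^{-\alpha};K\ge1]$ into $\Gamma(\alpha)^{-1}\int_0^\infty t^{\alpha-1}\bigl[(1-\theta-\theta e^{-t})^N-(1-\theta)^N\bigr]\,dt$. The mean-value theorem then bounds this by $N\theta q^{N-1}$, with $q=\max\{1-\theta,|1-2\theta|\}<1$. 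Finally, drop the garbled line about $e_1$ in your conclusion. The lemma only asserts existence of the expansion (with the leading constant $2\sqrt{N/(n\pi)}$, which you derive correctly), and $e_1=-(n-2)/16$ comes later from the recurrence.
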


\begin{proof}
	Set $\theta=2/n$ and condition on
	\[
		K:=X_1+X_2\sim\Bin(N,\theta).
	\]
	Conditionally on $K=k$, one has
	\[
		X_2\mid K=k\sim\Bin(k,1/2),\qquad
		D_1=2X_2-k=:S_k,
	\]
	where $S_k$ is the position of the symmetric simple walk after $k$
	steps. Thus
	\[
		W_N=\E g(K),\qquad g(k):=\E|S_k|.
	\]

	The exact formula for the walk expectation is
	\[
		g(k)=\frac{k}{2^{k-1}}\,\binom{k-1}{\lfloor(k-1)/2\rfloor}
		\qquad(k\ge1),\qquad g(0)=0.
	\]
	We spell out the parity split, since this is the point where the lattice
	oscillation enters. If $k=2s$, then
	\[
		g(2s)=\frac{2s}{4^s}\binom{2s}{s},
	\]
	because \(\binom{2s-1}{s-1}=\frac12\binom{2s}{s}\). Proposition~\ref{prop:binomial}
	therefore gives
	\[
		g(2s)
		=\sqrt{\frac{4s}{\pi}}
		\left(1-\frac{1}{8s}+\frac{1}{128s^2}+O(s^{-3})\right)
		=\sqrt{\frac{2k}{\pi}}
		\left(1-\frac{1}{4k}+\frac{1}{32k^2}+O(k^{-3})\right).
	\]
	If $k=2s+1$, then
	\[
		g(2s+1)=\frac{2s+1}{4^s}\binom{2s}{s}.
	\]
	Using again Proposition~\ref{prop:binomial}, now with \(s=(k-1)/2\), gives
	\[
		g(k)
		=\sqrt{\frac{2k}{\pi}}
		\left(1+\frac{1}{4k}+\frac{1}{32k^2}+O(k^{-3})\right)
		\qquad(k\ \hbox{odd}).
	\]
	The same calculation to any prescribed order produces two asymptotic series,
	one for even \(k\) and one for odd \(k\). Taking their half-sum and half-difference
	repackages them as a smooth part plus a period-$2$ part: for every fixed \(R\),
	\[
		g(k)=
		\sqrt{\frac{2k}{\pi}}
		\left(\sum_{h=0}^{R}b_h k^{-h}
		+(-1)^k\sum_{h=1}^{R}c_h k^{-h}\right)
		+O(k^{1/2-R-1}),
	\]
	with, in particular, \(b_0=1\), \(b_1=0\), \(b_2=1/32\), and \(c_1=-1/4\).
	The remaining constants \(b_h,c_h\) are determined by the central-binomial
	coefficients; their explicit form is not needed here. The precise meaning of
	the remainder matters: Proposition~\ref{prop:binomial} is an asymptotic
	expansion as $k\to\infty$, so for each fixed $R$ there are constants $k_R$ and
	$C_R$ such that the display holds with absolute remainder at most
	$C_R\,k^{1/2-R-1}$ for all $k\ge k_R$. We shall use the expansion only on the
	event $K\ge\mu/2=N\theta/2$, where $k\asymp N$; for all sufficiently large $N$
	the estimate is therefore uniform on the whole region in which it is applied.

	The oscillating part is exponentially small after averaging. Indeed, for
	$0<\theta<1$ put $q=\max\{1-\theta,|1-2\theta|\}<1$. Then for every
	fixed $\alpha>0$,
	\[
		\E[(-1)^K K^{-\alpha};K\ge1]=O(Nq^N)=O(e^{-cN})
	\]
	for some $c>0$. To prove this,
	use the Laplace representation
	\[
		k^{-\alpha}=\Gamma(\alpha)^{-1}\int_0^\infty t^{\alpha-1}e^{-kt}\,dt.
	\]
	Then
	\[
		\E[(-1)^K K^{-\alpha};K\ge1]
		=\frac1{\Gamma(\alpha)}\int_0^\infty t^{\alpha-1}
		\bigl[(1-\theta-\theta e^{-t})^N-(1-\theta)^N\bigr]\,dt.
	\]
	For $0\le y=e^{-t}\le1$ the mean-value theorem gives
	\[
		|(1-\theta-\theta y)^N-(1-\theta)^N|\le N\theta y\,q^{N-1},
	\]
	so the integral is at most
	\[
		N\theta q^{N-1}\Gamma(\alpha)^{-1}\int_0^\infty t^{\alpha-1}e^{-t}\,dt=N\theta q^{N-1}
	\]
	in absolute value. Since $q<1$ is fixed, $Nq^N=O(e^{-cN})$ for any $c<-\log q$.

	These are exactly the oscillatory terms present: the parity expansion contributes $(-1)^k k^{1/2-h}$ with $h\ge1$ (there is no $h=0$ term, since the leading $b_0=1$ is common to both parities), so the estimate above applies with $\alpha=h-\tfrac12\ge\tfrac12>0$. The exceptional uniform case $n=2$ ($\theta=1$) is excluded and treated separately.

	It remains to average the smooth powers of $K$. Let $\mu=N\theta$. Two events
	must first be stitched: the decomposition of $g(K)$ is used on
	$\{K\ge\mu/2\}$, while the oscillatory estimate above was proved on
	$\{K\ge1\}$; the difference is a term supported on $\{1\le K<\mu/2\}$, whose
	probability is exponentially small by Hoeffding's
	inequality~\cite[Theorem~2]{hoeffding1963}, so the two estimates combine. On the
	discarded event $\{K<\mu/2\}$ the integrand itself is bounded crudely by
	$g(k)\le k\le N$, so its total contribution is $O(Ne^{-cN})$, negligible at
	every order. On $K\ge\mu/2$, the remainder
	$O(K^{1/2-R-1})$ is $O(N^{1/2-R-1})$.

	For the smooth powers we claim: for each fixed exponent $\alpha$ occurring in
	the smooth part ($\alpha=\tfrac12-h$, $h=0,1,\dots,R$) and every fixed
	$L\ge0$,
	\begin{equation}\label{eq:taylor-claim}
		\E\bigl[K^\alpha;\,K\ge\mu/2\bigr]
		=
		\sum_{\ell=0}^{L}
		\binom{\alpha}{\ell}\,
		\mu^{\alpha-\ell}\,
		\E(K-\mu)^\ell
		+O\bigl(N^{\alpha-(L+1)/2}\bigr).
	\end{equation}
	This is the rigorous form of the Taylor step. Indeed, apply Taylor's theorem
	\emph{with remainder} to $f(x)=x^\alpha$ at $x=\mu$ on the interval
	$[\mu/2,N]$ (which contains $K$ on the event considered; note that the
	deviation $(K-\mu)/\mu$ need not stay within the radius of the binomial
	series, so the finite form is essential). Since $x\ge\mu/2$ there and
	$\alpha-L-1<0$,
	\[
		\sup_{\mu/2\le x\le N}\bigl|f^{(L+1)}(x)\bigr|
		\le C_{\alpha,L,\theta}\,\mu^{\alpha-L-1},
	\]
	so on $\{K\ge\mu/2\}$
	\[
		K^\alpha
		=\sum_{\ell=0}^{L}\binom{\alpha}{\ell}\,\mu^{\alpha-\ell}(K-\mu)^\ell
		+O\bigl(\mu^{\alpha-L-1}|K-\mu|^{L+1}\bigr).
	\]
	For a binomial variable with fixed parameter $\theta$ the standard
	central-moment bound (sums of independent bounded centered variables) gives
	\[
		\E|K-\mu|^{L+1}=O\bigl(N^{(L+1)/2}\bigr),
	\]
	so the expected Taylor remainder is
	$O(N^{\alpha-L-1}\,N^{(L+1)/2})=O(N^{\alpha-(L+1)/2})$. Finally, replacing the
	truncated moments $\E[(K-\mu)^\ell;K\ge\mu/2]$ by the full central moments
	$\E(K-\mu)^\ell$ changes the expression by at most $O(N^\ell e^{-cN})$,
	because \(K<\mu/2\) is exponentially unlikely and \(|K-\mu|^\ell=O(N^\ell)\)
	there. This proves \eqref{eq:taylor-claim}.

	Since the full central moment
	$\E(K-\mu)^\ell$ is a polynomial in $N$ of degree $\lfloor\ell/2\rfloor$,
	the average of each smooth term has an expansion in integer powers of $N^{-1}$
	relative to its leading order $N^\alpha$; the leading smooth term ($h=0$,
	$\alpha=\tfrac12$) gives
	$\sqrt{2/\pi}\,\E K^{1/2}=\sqrt{2N\theta/\pi}\,(1+O(N^{-1}))
	=2\sqrt{N/(n\pi)}\,(1+O(N^{-1}))$.
	Combining these finitely many contributions gives the asserted complete
	expansion of $W_N$. For the bookkeeping of the truncation parameters: given a
	target order $k_0$, choose $R,L\ge2k_0+2$; on $\{K\ge\mu/2\}$ the walk-side
	remainder is $O(N^{1/2-R-1})$ and the Taylor remainders are
	$O(N^{1/2-(L+1)/2})$, both beyond relative order $N^{-k_0}$; since the oscillating part and the small-$K$ region are exponentially small and $k_0$ is arbitrary, the expansion is complete; all implied constants depend on the fixed $n$.
\end{proof}

\begin{lemma}[Exact recurrence for $W_N$]
	\label{lem:exact-W-recurrence}
	For every $n\ge2$, the sequence $W_N=\E|X_2-X_1|$ (the per-edge expectation under the uniform $n$-bin law) satisfies
	\[
		n(N+1)W_{N+2}
		-2\bigl((n-2)(N+1)+1\bigr)W_{N+1}
		+(n-4)(N+1)W_N=0,
	\]
	with $W_1=2/n$ and $W_2=4(n-1)/n^2$. At $n=4$ the last coefficient vanishes
	and the recurrence degenerates to a two-term one (consistently with the closed
	form of Theorem~\ref{thm:Ed-n4}); at $n=2$ it reproduces the folded-binomial
	sequence of Theorem~\ref{thm:Ed-n2}. The asymptotic analysis below uses it for
	$n\ge3$.
\end{lemma}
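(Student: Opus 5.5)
The plan is to read the recurrence off a generating function. By Theorem~\ref{thm:Ed-closed} and \eqref{eq:Ed-Edc-symmetry}, $W_N=\E(d)/(n-1)=\frac{N}{n^N}\bigl[T_N-(n-4)T_{N-1}\bigr]=\frac{2N}{n^N}S_N$, in the notation of that proof. So I will work with the rescaled sequence $V_N:=n^NW_N=2NS_N$ for $N\ge1$, with $V_0:=0$, and find its ordinary generating function. From \eqref{eq:Fz-gen}, $\sum_N S_Nz^N=F(z)=\tfrac12(f-1)$ with
\[
	f(z)=\sqrt{\frac{1-(n-4)z}{1-nz}},
\]
so that
\[
	G(z):=\sum_{N\ge0}V_Nz^N=2zF'(z)=zf'(z).
\]
Logarithmic differentiation, exactly as in the proof of Proposition~\ref{prop:n3}, gives $f'/f=2/\bigl((1-nz)(1-(n-4)z)\bigr)$. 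Hence
\[
	G(z)=2z\,(1-(n-4)z)^{-1/2}(1-nz)^{-3/2},
\]
an algebraic function.

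Next I derive a first-order linear ODE with polynomial coefficients for $G$. Write $P(z)=(1-nz)(1-(n-4)z)$. Then
\[
	\frac{G'}{G}=\frac1z+\frac{n-4}{2(1-(n-4)z)}+\frac{3n}{2(1-nz)}.
\]
Multiplying by $2zP(z)$ gives
\[
	2zP(z)\,G'(z)=\bigl[2P(z)+(n-4)z(1-nz)+3nz(1-(n-4)z)\bigr]G(z),
\]
whose coefficients are polynomials of degree at most $2$. Comparing coefficients of $z^{N+2}$ yields a three-term relation among $V_{N+2},V_{N+1},V_N$ with coefficients linear in $N$. Substituting $V_N=n^NW_N$ and dividing by $n^N$ should give the stated form
\[
	n(N+1)W_{N+2}-2\bigl((n-2)(N+1)+1\bigr)W_{N+1}+(n-4)(N+1)W_N=0.
\]
Here the leading coefficient $n$ arises from $n^{N+2}/n^{N+1}$-type ratios, and the overall factor $N+1$ may need to be cancelled. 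Because every step is an identity of formal power series valid for each real $b=n-2$, the same argument covers $n=2$ and $n=4$ with no exceptional cases.

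The initial values are direct. For $W_1$: $K\in\{0,1\}$, so $|D_1|=\mathbf 1\{K=1\}$ and $W_1=2/n$. For $W_2$: condition on $K\in\{1,2\}$ and use Lemma~\ref{lem:E-bin-walk}, which gives $g(1)=g(2)=1$. Then $W_2=\Pr(K\ge1)=1-\bigl((n-2)/n\bigr)^2=4(n-1)/n^2$.

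The degenerate cases serve as checks. At $n=4$ the last coefficient vanishes, and the resulting two-term recurrence is
\[
	4(N+1)W_{N+2}=2(2N+3)W_{N+1}.
\]
This must agree with the ratio of consecutive values of $W_N=N4^{-N}\binom{2N}{N}$ from Theorem~\ref{thm:Ed-n4}; I will verify that agreement. At $n=2$ I will check the recurrence against $W_N=N2^{1-N}\binom{N-1}{\lfloor(N-1)/2\rfloor}$.

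The main obstacle is purely bookkeeping: the coefficient extraction with index shifts, and the rescaling from $V$ to $W$. If the raw relation comes out with coefficients like $(N+2)$, I will reconcile it with the stated $(N+1)$ form using the checks above, testing the final identity numerically at $n=3$ against $Q_N$ from Proposition~\ref{prop:n3}. An alternative route, if the ODE is messy, is to combine the $P$-recurrence for $T_N(b)$, namely $kT_k=(2k-1)bT_{k-1}-(k-1)(b^2-4)T_{k-2}$, with the closed form. I expect the ODE route to be shorter.
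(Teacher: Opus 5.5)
Your proposal is correct and is essentially the paper's argument: both start from the algebraic generating function \eqref{eq:Fz-gen}, use $f'/f=2/P$ to obtain a first-order ODE with polynomial coefficients, and compare coefficients. The only difference is that you encode $V_N=n^NW_N=2NS_N$ through $G=zf'$, whereas the paper encodes $U_M=S_{M+1}$ through $H(t)=F(t)/t$; your choice absorbs the factor $N$, so no rescaling by $N$, $N+1$, $N+2$ is needed afterwards.

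Carrying out your bookkeeping, the bracket simplifies to $2+4z-2n(n-4)z^2$, so $zPG'=\bigl(1+2z-n(n-4)z^2\bigr)G$. Extracting $[z^{N+2}]$ gives $(N+1)V_{N+2}-2\bigl((n-2)(N+1)+1\bigr)V_{N+1}+n(n-4)(N+1)V_N=0$ directly: the factor $N+1$ needs no cancellation, and you must divide by $n^{N+1}$, not $n^N$, to reach the stated form.
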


\begin{proof}
	From the exact finite sum for the uniform case, with the abbreviation $a:=n-2$
	(the letter $z$ is reserved for the generating variable of
	\S\ref{ssec:trinomial}), set
	\[
		U_M:=\sum_{j=0}^{M}\binom{M}{j}
		\binom{j}{\lfloor j/2\rfloor}a^{M-j},
		\qquad
		W_N=\frac{2N}{n^N}U_{N-1},
	\]
	so that $U_{N-1}=S_N$, the sum of \S\ref{ssec:trinomial}. With the generating
	function
	$A(w)=\sum_{j\ge0}\binom{j}{\lfloor j/2\rfloor}w^j
	=\tfrac{1}{2w}\bigl(\sqrt{(1+2w)/(1-2w)}-1\bigr)$ of Step~1 in the proof of
	Theorem~\ref{thm:Ed-closed}, the ordinary generating function of $U_M$ is
	\[
		H(t):=\sum_{M\ge0}U_Mt^M
		=
		\frac{1}{1-at}A\!\left(\frac{t}{1-at}\right)
		=
		\frac{1}{2t}
		\left(\sqrt{\frac{1-(a-2)t}{1-(a+2)t}}-1\right).
	\]
	These are formal power-series identities. Write
	\begin{gather*}
		R(t):=\sqrt{\frac{1-(a-2)t}{1-(a+2)t}}=2tH(t)+1,\\
		P(t):=1-2at+(a^2-4)t^2=(1-(a-2)t)(1-(a+2)t).
	\end{gather*}
	Logarithmic differentiation gives $R'/R=2/P$, i.e.\ $P\,R'=2R$; substituting $R=2tH+1$,
	$R'=2H+2tH'$ and using $P-2t=1-2(a+1)t+(a^2-4)t^2$ gives
	\[
		t\bigl(1-2at+(a^2-4)t^2\bigr)H'(t)
		+\bigl(1-2(a+1)t+(a^2-4)t^2\bigr)H(t)-1=0.
	\]
	Substituting $H(t)=\sum_{M\ge0}U_Mt^M$ and comparing coefficients gives
	\[
		(M+1)(a^2-4)U_M
		-2\bigl((M+2)a+1\bigr)U_{M+1}
		+(M+3)U_{M+2}=0.
	\]
	Finally insert $M=N-1$ and
	\[
		U_{N-1}=\frac{n^N}{2N}W_N.
	\]
	After clearing the common factor, the displayed recurrence for $W_N$
	follows.
\end{proof}

\begin{remark}[The recurrence as an exact tabulator]\label{rem:exact-tabulation}
	Beyond its role in the asymptotic analysis, the recurrence of
	Lemma~\ref{lem:exact-W-recurrence} is a convenient route to the expectation
	\emph{exactly}. Seeded by its initial values $W_1,W_2$, it generates the whole
	exact sequence $W_1,\dots,W_K$ --- and hence $\E(d)=(n-1)W_N$ and
	$\E(d^\circ)=nW_N$ --- in $O(K)$ rational operations, a fixed number of integer
	multiplications and a single division per step, in contrast to evaluating the
	closed sum~\eqref{eq:Ed-sum} term by term ($\Theta(N)$ summands per value).
\end{remark}

\begin{lemma}[Recurrence for the asymptotic coefficients]
	\label{lem:e-recurrence}
	Set $e_0=1$. The coefficients in Lemma~\ref{lem:mixture-existence}
	satisfy, for $k\ge1$,
	\[
		e_k=\frac{1}{4k}\sum_{j=0}^{k-1}\gamma_{k,j}e_j,
	\]
	where, with $m=k+1-j$,
	\[
		\gamma_{k,j}
		=
		\bigl[n(2^m-2)+4\bigr]\binom{\tfrac12-j}{m}
		+
		\bigl[n(2^{m-1}-2)+2\bigr]\binom{\tfrac12-j}{m-1}.
	\]
\end{lemma}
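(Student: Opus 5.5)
Substitute the asymptotic expansion of Lemma~\ref{lem:mixture-existence} into the exact recurrence of Lemma~\ref{lem:exact-W-recurrence} and identify coefficients of powers of $N$. Write $W_N\sim c\sum_{j\ge0}e_jN^{1/2-j}$ with $c=2/\sqrt{n\pi}$. The constant $c$ cancels, since the recurrence is linear and homogeneous. Lemma~\ref{lem:mixture-existence} guarantees that a complete expansion exists, so we may plug it into the recurrence term by term. The uniqueness of asymptotic expansions in the scale $N^{1/2-k}$ then forces every coefficient of the resulting expansion to vanish.

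The key computation is the expansion of the shifted terms. For $s\in\{1,2\}$ and each $j$, the binomial series gives
\[
(N+s)^{1/2-j}=\sum_{i\ge0}\binom{\tfrac12-j}{i}s^iN^{1/2-j-i}.
\]
It is convenient to shift the index first. Put $M=N+1$, so the recurrence reads
\[
nM\,W_{M+1}-2\bigl((n-2)M+1\bigr)W_M+(n-4)M\,W_{M-1}=0,
\]
which is more symmetric. Alternatively, keep $N$ and note that $N+1$ multiplies everything. I will use the version with $N$ and factor $N+1$ where needed.

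Collect the coefficient of $N^{3/2-k}$; the leading order is $N^{3/2}$, coming from the factor $N+1$ times $N^{1/2}$.
\begin{itemize}
\item[(i)] The term $n(N+1)W_{N+2}$ contributes $n\bigl[\binom{1/2-j}{i}2^i+\binom{1/2-j}{i-1}2^{i-1}\bigr]e_j$ with $i=k-j$.
\item[(ii)] The middle term contributes $-2(n-2)\bigl[\binom{1/2-j}{i}+\binom{1/2-j}{i-1}\bigr]e_j-2\binom{1/2-j}{i-1}e_j$.
\item[(iii)] The last term contributes $(n-4)\bigl[\delta_{i0}+\delta_{i1}\bigr]$-type pieces coming only from the factor $N+1$.
\end{itemize}

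The leading order $k=0$ gives $n-2(n-2)+(n-4)=0$ automatically. At order $k$ the sum runs over $j\le k$. The $j=k$ term cancels by the same identity, so the equation is really at order $k+1$, with $m=k+1-j$ playing the role of $i$. Check the $j=k$ contribution at $m=1$: it is
\[
\bigl[n\cdot2(\tfrac12-k)+n\bigr]-2(n-2)\bigl[(\tfrac12-k)+1\bigr]-2+(n-4)=-4k\quad\text{(up to sign)},
\]
using $\binom{1/2-k}{1}=\tfrac12-k$. This is exactly the $4k$ in the denominator. The contributions from $j<k$, grouped as a multiple of $\binom{1/2-j}{m}$ plus a multiple of $\binom{1/2-j}{m-1}$, should simplify to $n(2^m-2)+4$ and $n(2^{m-1}-2)+2$ respectively. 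Here $-2(n-2)=-2n+4$ supplies the ``$-2n+4$'' and the extra $-2$ adjusts the second coefficient. The $(n-4)$ term enters only for $m\le1$ and is absorbed because $\binom{\cdot}{0}$ and $\binom{\cdot}{-1}$ behave as stated.

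The main obstacle is bookkeeping, in two places:
\begin{itemize}
\item aligning the index shift ($j=k+1$ versus $j=k$) so that the solved-for coefficient is $e_k$ with factor $4k$;
\item getting the overall sign right, so that $\gamma_{k,j}$ comes out with a plus sign.
\end{itemize}
I would verify this by checking $k=1$ against the known value $e_1=-(n-2)/(8n)$. This value is implied by the $n=4$ remark, $-(n-2)/(8\sqrt\pi N)$ relative to $2/\sqrt\pi$ giving $-1/16$ at $n=4$, and also by a direct Taylor computation from Lemma~\ref{lem:mixture-existence}. Since $\mu=2N/n$ and the variance term gives $-\tfrac18(1-2/n)\mu^{-1}$ relative, $e_1=-(n-2)/(8\cdot 2)\cdot$(normalization). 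Fixing the sign convention by this check completes the plan.
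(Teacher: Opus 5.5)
Your plan is correct and is essentially the paper's proof. With $x=1/N$, the multiplier of $e_j$ is $G_j(x)=n(1+x)(1+2x)^{1/2-j}-2\bigl[(n-2)+(n-1)x\bigr](1+x)^{1/2-j}+(n-4)(1+x)$. Its constant term vanishes. At $j=k$ its $x$-coefficient is exactly $-4k$, as your own computation shows, so no sign needs fixing. For $j<k$ (so $m\ge2$) its $x^{m}$-coefficient groups into $\gamma_{k,j}$, and the $(n-4)(1+x)$ term plays no role there.

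The proposed sanity check, however, uses a wrong value. The recurrence gives $\gamma_{1,0}=(2-n)/4$, hence $e_1=-(n-2)/16$, not $-(n-2)/(8n)$; a check against the latter would spuriously fail. The value $-(n-2)/16$ agrees with three independent sources:
\begin{itemize}
\item Corollary~\ref{cor:coefficients};
\item your own Taylor estimate $-\tfrac18(1-\tfrac2n)\mu^{-1}=-(n-2)/(16N)$;
\item $e_1(4)=-\tfrac18$, obtained from Stirling applied to Theorem~\ref{thm:Ed-n4}.
\end{itemize}
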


\begin{proof}
	Because Lemma~\ref{lem:mixture-existence} guarantees that a complete expansion
	\emph{exists}, substituting the ansatz into the exact recurrence and matching
	coefficients is legitimate, by the uniqueness of asymptotic expansions: the
	recurrence computes the expansion that the lemma guarantees.
	Write the expansion from Lemma~\ref{lem:mixture-existence} in the form
	\[
		W_N=C\,N^{1/2}E(N),
		\qquad
		C=\frac{2}{\sqrt{n\pi}},
		\qquad
		E(N)\sim\sum_{j\ge0}e_jN^{-j}.
	\]
	Substitute this into the recurrence of Lemma~\ref{lem:exact-W-recurrence}
	and divide by the common factor $C N^{3/2}$. The coefficient multiplying
	a fixed $e_j$ is obtained by expanding
	\[
		n(1+N^{-1})(1+2N^{-1})^{1/2-j}
		-2\bigl[(n-2)+(n-1)N^{-1}\bigr](1+N^{-1})^{1/2-j}
		+(n-4)(1+N^{-1}).
	\]
	Let $x=N^{-1}$. Denote this multiplier by
	\[
		G_j(x)=
		n(1+x)(1+2x)^{1/2-j}
		-2\bigl[(n-2)+(n-1)x\bigr](1+x)^{1/2-j}
		+(n-4)(1+x).
	\]
	The leading terms cancel identically. After the division by $CN^{3/2}$,
	matching the coefficient of $N^{-(k+1)}$
	in the recurrence is the same as collecting, for each $j\le k$, the
	coefficient of $x^{k+1-j}$ in $G_j(x)$.

	For $j<k$, put $m=k+1-j\ge2$. The last term $(n-4)(1+x)$ of $G_j$ then contributes nothing. By $(1+cx)^{1/2-j}=\sum_i\binom{1/2-j}{i}(cx)^i$, the degree-$m$ pieces carry $\binom{1/2-j}{m}$ and the degree-$(m-1)$ pieces (times the extra factor $x$) carry $\binom{1/2-j}{m-1}$; collecting the two,
	\[
		[x^m]G_j(x)
		=
		\bigl[n(2^m-2)+4\bigr]\binom{\tfrac12-j}{m}
		+
		\bigl[n(2^{m-1}-2)+2\bigr]\binom{\tfrac12-j}{m-1}
		=\gamma_{k,j}.
	\]
	For $j=k$ ($m=1$) the linear term $(n-4)$ now contributes, and the same expansion gives $n(2-2k)-(n-2)(1-2k)-2(n-1)+(n-4)$, i.e.
	\[
		[x]G_k(x)=-4k.
	\]
	Thus the coefficient comparison has the form
	\[
		-4k\,e_k+\sum_{j=0}^{k-1}\gamma_{k,j}e_j=0,
	\]
	which proves the recurrence. The recursion is nonsingular at every step: the
	coefficient of $e_k$ is $-4k\ne0$, so each $e_k$ is determined uniquely by its
	predecessors.
\end{proof}

\subsection{Full asymptotic expansion of expectation}
	\label{ssec:full}

\begin{theorem}[Higher asymptotic expansion of the cyclic DTV]
	\label{thm:main-expansion}
	For every fixed $n\ge3$,
	\[
		\E\!\left(\frac{d^\circ}{\sqrt{Nn}}\right)
		\sim
		\frac{2}{\sqrt\pi}
		\left(1+\sum_{k\ge1}\frac{e_k(n)}{N^k}\right),
		\qquad N\to\infty,
	\]
	where $e_0=1$ and $e_k$ is generated by the recurrence of
	Lemma~\ref{lem:e-recurrence}. 
\end{theorem}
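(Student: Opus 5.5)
The theorem follows by assembling three earlier results, with almost no new analysis. The plan has three steps.

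\emph{Step 1: reduce to one edge.} By exchangeability \eqref{eq:Ed-Edc-symmetry}, $\E(d^\circ)=n\,W_N$ with $W_N=\E|X_2-X_1|$. Hence
\[
\E\!\left(\frac{d^\circ}{\sqrt{Nn}}\right)=\frac{n}{\sqrt{Nn}}\,W_N=\sqrt{\frac{n}{N}}\,W_N .
\]

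\emph{Step 2: insert the expansion of $W_N$.} For fixed $n\ge3$, Lemma~\ref{lem:mixture-existence} supplies a complete asymptotic expansion
\[
W_N\sim 2\sqrt{N/(n\pi)}\,\Bigl(1+\sum_{k\ge1}e_k(n)N^{-k}\Bigr).
\]
Multiplying by the deterministic factor $\sqrt{n/N}$ gives the leading constant $\sqrt{n/N}\cdot2\sqrt{N/(n\pi)}=2/\sqrt\pi$. A truncation after the term $k=R$ carries a relative remainder $O_R(N^{-R-1})$, and multiplication by $\sqrt{n/N}$ preserves this. The asserted expansion therefore holds in the same asymptotic sense, with exactly the coefficients $e_k(n)$ of the per-edge expansion.

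\emph{Step 3: identify the coefficients.} It remains to show that the $e_k(n)$ are the numbers produced by Lemma~\ref{lem:e-recurrence}. Existence of the expansion is guaranteed by Lemma~\ref{lem:mixture-existence}. Substituting it into the exact three-term recurrence of Lemma~\ref{lem:exact-W-recurrence} and matching powers of $N$ is then legitimate, because asymptotic expansions in the scale $N^{1/2-k}$ are unique. The matching yields $-4k\,e_k+\sum_{j<k}\gamma_{k,j}e_j=0$, whose leading coefficient $-4k$ never vanishes. So $e_0=1$ together with this recurrence determines every $e_k(n)$, and these must coincide with the coefficients in the expansion of $W_N$.

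\emph{Main obstacle.} The only delicate point is the coefficient matching in Step 3. The shifts $W_{N+1}$ and $W_{N+2}$ must be re-expanded in powers of $N^{-1}$, using $(1+x)^{1/2-j}$ and $(1+2x)^{1/2-j}$, up to a uniform truncation order. One must also check that the truncation remainders stay of strictly higher order after multiplication by the linear-in-$N$ recurrence coefficients. Both points are already handled in Lemma~\ref{lem:e-recurrence}, so here it suffices to cite that lemma and observe that the factor $\sqrt{n/N}$ commutes with the whole procedure.

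As a sanity check, the first coefficient can be computed from the recurrence and compared with the known value $-(n-2)/(8N)$ and with the $n=4$ Stirling expansion.
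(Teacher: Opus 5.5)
Your proposal is correct and follows the paper's proof: you reduce to the single edge via $\E(d^\circ)=nW_N$, import the expansion of $W_N$ from Lemma~\ref{lem:mixture-existence}, and identify the coefficients through Lemma~\ref{lem:e-recurrence} using uniqueness of asymptotic expansions. One small correction to your sanity check: the paper's first-order term for $\E(d^\circ/\sqrt{Nn})$ is $-(n-2)/(8\sqrt\pi\,N)$, which equals $\tfrac{2}{\sqrt\pi}\,e_1/N$ with $e_1=-(n-2)/16$; your value $-(n-2)/(8N)$ omits the factor $1/\sqrt\pi$.
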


\begin{proof}
	By Lemma~\ref{lem:mixture-existence},
	\[
		W_N\sim
		2\sqrt{\frac{N}{n\pi}}
		\left(1+\sum_{k\ge1}\frac{e_k(n)}{N^k}\right).
	\]
	Since $\E(d^\circ)=nW_N$, division by $\sqrt{Nn}$ gives
	\[
		\E\!\left(\frac{d^\circ}{\sqrt{Nn}}\right)
		\sim
		\frac{2n}{\sqrt{Nn}}
		\sqrt{\frac{N}{n\pi}}
		\left(1+\sum_{k\ge1}\frac{e_k(n)}{N^k}\right)
		=
		\frac{2}{\sqrt\pi}
		\left(1+\sum_{k\ge1}\frac{e_k(n)}{N^k}\right).
	\]
	Lemma~\ref{lem:e-recurrence} gives the recurrence for the coefficients.
	Substituting $k=1,2,3,4$ into that recurrence gives the values
	$e_1,\ldots,e_4$ listed in Corollary~\ref{cor:coefficients}.
\end{proof}

\begin{corollary}[Coefficients of the asymptotic expansion]
	\label{cor:coefficients}
	With the notation of Theorem~\ref{thm:main-expansion}, the leading coefficients
	$e_k(n)$ are
	\[
	\begin{aligned}
		e_1&=-\frac{n-2}{16},\\
		e_2&=\frac{-3n^2+12n+4}{512},\\
		e_3&=-\frac{5(n-2)(3n^2-12n-4)}{8192},\\
		e_4&=-\frac{21(5n^2-20n+4)^2}{524288}.
	\end{aligned}
	\]
\end{corollary}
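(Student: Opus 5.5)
The corollary is a direct evaluation of the recurrence of Lemma~\ref{lem:e-recurrence} for $k=1,2,3,4$, seeded by $e_0=1$. Theorem~\ref{thm:main-expansion} already identifies the $e_k(n)$ as the coefficients of the expansion of $W_N$, so nothing analytic remains. The work is to tabulate the weights $\gamma_{k,j}$ and then compute $e_k=\frac1{4k}\sum_{j<k}\gamma_{k,j}e_j$ step by step, factoring each result as a polynomial in $n$.

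The first step is the table of generalized binomials $\binom{1/2-j}{i}$ for $j=0,\dots,3$ and $i\le5$. For $j=0$ these are $1,\tfrac12,-\tfrac18,\tfrac1{16},-\tfrac5{128},\tfrac7{256}$. For $j=1$, the binomials of $-\tfrac12$ are $1,-\tfrac12,\tfrac38,-\tfrac5{16},\tfrac{35}{128}$. Similarly we need those of $-\tfrac32$ and $-\tfrac52$. Only $m=k+1-j\ge2$ enters, so $\gamma_{k,j}$ uses the pairs $\bigl(\binom{1/2-j}{m},\binom{1/2-j}{m-1}\bigr)$ with prefactors $n(2^m-2)+4$ and $n(2^{m-1}-2)+2$.

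As a sanity check, take $k=1$, where only $j=0$ and $m=2$ occur. Then
\[
\gamma_{1,0}=(2n+4)\bigl(-\tfrac18\bigr)+2\cdot\tfrac12=-\tfrac{n-2}{4}+\ldots
\]
More carefully, $-\tfrac{2n+4}{8}+1=-\tfrac{n-2}{4}$, so $e_1=-\tfrac{n-2}{16}$. This matches the claim and the $n=4$ Stirling check recorded after Theorem~\ref{thm:Ed-n4}.

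Next I would compute $\gamma_{2,0}$ ($m=3$) and $\gamma_{2,1}$ ($m=2$), and form $e_2=\tfrac18(\gamma_{2,0}+\gamma_{2,1}e_1)$. The expected output is $(-3n^2+12n+4)/512$.

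The steps $k=3$ and $k=4$ follow identically, with three and four terms respectively. Each $\gamma_{k,j}$ is linear in $n$, so $e_k$ is a polynomial of degree at most $k$. Two features should emerge: the factor $(n-2)$ in $e_3$, and the perfect square in $e_4$. The square is the main thing to check; I would verify it by expanding $-21(5n^2-20n+4)^2/524288$ and comparing all five coefficients.

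The main obstacle is purely bookkeeping: sign and power-of-two errors in the $k=4$ step, where the denominators reach $2^{19}$. I would guard against them in three ways:
\begin{enumerate}
\item evaluating the whole chain at $n=4$, where the exact closed form of Theorem~\ref{thm:Ed-n4} combined with Proposition~\ref{prop:binomial} gives independent values;
\item evaluating at $n=2$, where the expansion must agree with the folded-binomial asymptotics. In particular $e_1$ and $e_3$ vanish there, consistent with the factor $(n-2)$;
\item checking at a third value such as $n=6$ numerically against the exact recurrence of Lemma~\ref{lem:exact-W-recurrence} for large $N$.
\end{enumerate}
Checks at three points, plus the known degree bound, pin down each polynomial up to degree $2$. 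For $e_3$ and $e_4$, of degree at most $3$ and $4$, I would add one or two further numeric checks.
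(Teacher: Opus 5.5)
Your proposal is correct and is the paper's own proof: the paper also obtains the corollary by substituting $k=1,2,3,4$ into the recurrence of Lemma~\ref{lem:e-recurrence}. Carrying the computation out confirms the stated values (for instance $\gamma_{2,0}=n/8$ and $\gamma_{2,1}=(3n+2)/4$ give $e_2$, and the $k=4$ sum does collapse to $-21(5n^2-20n+4)^2/2^{19}$).

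One caution about your $n=2$ cross-check. The true $W_N$ oscillates with the parity of $N$ there, so $e_k(2)$ should be compared with the parity-averaged coefficients $b_k$ from the proof of Lemma~\ref{lem:mixture-existence} ($b_1=b_3=0$, $b_2=1/32$, $b_4=-21/2048$). It should not be compared with the even-$N$ or odd-$N$ expansion alone.
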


\begin{proof}
	Direct substitution of $k=1,2,3,4$ into the recurrence of Lemma~\ref{lem:e-recurrence}.
\end{proof}

\begin{remark}[Fixed $n$; the effective parameter is $n/N$]\label{rem:n-uniformity}
	All $N\to\infty$ expansions in this paper hold for \emph{fixed} $n$, with
	$n$-dependent implied constants; the expansion of
	Theorem~\ref{thm:main-expansion} is \emph{not} claimed uniform in $n$. Indeed,
	the recurrence shows that $e_k(n)$ is a polynomial in $n$ of degree $k$
	($e_1\sim-n/16$, $e_2\sim-3n^2/512$; note also the common factor $3n^2-12n-4$
	of $e_2$ and $e_3$), so the series is morally an expansion in $n/N$ and is
	numerically useful for $N\gg n$. The complementary sparse regime (fixed $N$,
	$n\to\infty$) is treated in a separate paper; the two regimes are expected to
	join through the Poisson limit, with $N/n$ fixed.
\end{remark}

	Table~\ref{tab:higher-terms} shows the gain from the higher coefficients at
	$n=10$, with $R_N:=\E(d^\circ/\sqrt{Nn})$; its ``two terms'' and ``four terms''
	columns are the truncations of Theorem~\ref{thm:main-expansion} keeping the first
	two and the first four corrections~$e_k$, respectively, and the exact column is
	again generated by the recurrence of Lemma~\ref{lem:exact-W-recurrence}
	(Remark~\ref{rem:exact-tabulation}).
	\begin{table}[ht]
	\centering\small
	\begin{tabular}{@{}r|c|c|c|c|c@{}}
		$N$ & exact $R_N$ & two terms & error & four terms & error \\\hline
		50   & $1.1169317$ & $1.1169402$ & $8.5\cdot10^{-6}$ & $1.1169318$ & $9.9\cdot10^{-8}$ \\
		100  & $1.1226975$ & $1.1226985$ & $1.0\cdot10^{-6}$ & $1.1226975$ & $2.8\cdot10^{-9}$ \\
		200  & $1.1255484$ & $1.1255485$ & $1.2\cdot10^{-7}$ & $1.1255484$ & $8.4\cdot10^{-11}$ \\
		2000 & $1.1280970$ & $1.1280970$ & $1.2\cdot10^{-10}$ & $1.1280970$ & $8.1\cdot10^{-16}$ \\
	\end{tabular}
	\caption{Two- and four-term truncations of Theorem~\ref{thm:main-expansion}
	against exact values, uniform law, $n=10$.}
	\label{tab:higher-terms}
	\end{table}

\subsection{The exceptional two-bin case}
	\label{ssec:two-bin_case}

	The case $n=2$ is genuinely exceptional because $K=X_1+X_2=N$
	deterministically, so the parity correction in Lemma~\ref{lem:mixture-existence}
	survives.
	We use the shifted central-binomial expansion of Proposition~\ref{prop:binomial-shift}.

\begin{theorem}[Complete two-bin expansion]\label{thm:Ed-n2}
	For $n=2$, $d=|X_2-X_1|=|2X_1-N|$ with $X_1\sim\Bin(N,\tfrac12)$, and the exact value
	(Lemma~\ref{lem:E-bin-walk}) is a single central binomial coefficient:
	\[
		\E(d)|_{n=2}=N\binom{N-1}{\lfloor(N-1)/2\rfloor}2^{-(N-1)}
		=N\binom{2\lfloor N/2\rfloor}{\lfloor N/2\rfloor}4^{-\lfloor N/2\rfloor}.
	\]
	With the shift $\nu:=\lfloor N/2\rfloor+\tfrac14=\dfrac{2N+(-1)^N}{4}$, the
	central-binomial expansion of Proposition~\ref{prop:binomial-shift}
	gives the complete asymptotic expansion
	\[
		\E(d)|_{n=2}\sim\frac{N}{\sqrt{\pi\nu}}\Bigl[1-\frac1{64\nu^2}+\frac{21}{8192\nu^4}
		-\frac{671}{524288\nu^6}+\frac{180323}{134217728\nu^8}-\cdots\Bigr],
	\]
	in the usual sense: truncating the bracket after the $\nu^{-2R}$ term leaves a relative error $O(\nu^{-2R-2})$ there, hence an absolute error $O(N\,\nu^{-2R-5/2})$ for $\E(d)$. The shift $\nu$ is optimal in that it annihilates \emph{all}
	odd powers of $\nu^{-1}$: the expansion proceeds in even powers only, and the
	parity of $N$ enters solely through the definition of $\nu$.
\end{theorem}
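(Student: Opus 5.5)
The plan has two parts: first pin down the exact value as a single central binomial coefficient, then feed it to Proposition~\ref{prop:binomial-shift} with $m=\lfloor N/2\rfloor$.

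\emph{Exact value.} At $n=2$ the pair is exhaustive, so $X_1\sim\Bin(N,\tfrac12)$ and $d=|X_2-X_1|=|N-2X_1|$. Lemma~\ref{lem:E-bin-walk} with $m=N$ then gives
\[
\E(d)=N\,2^{-(N-1)}\binom{N-1}{\lfloor(N-1)/2\rfloor}.
\]
To obtain the second form I split by parity. If $N=2s$, then $\binom{2s-1}{s-1}=\tfrac12\binom{2s}{s}$, so $2^{-(2s-1)}\binom{2s-1}{s-1}=4^{-s}\binom{2s}{s}$. If $N=2s+1$, then $2^{-2s}\binom{2s}{s}=4^{-s}\binom{2s}{s}$ directly. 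In both cases $s=\lfloor N/2\rfloor$, which proves the displayed identity. The formula $\nu=\lfloor N/2\rfloor+\tfrac14=(2N+(-1)^N)/4$ is checked separately for $N$ even and odd.

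\emph{Expansion.} Apply Proposition~\ref{prop:binomial-shift} with $m=\lfloor N/2\rfloor$, so $m+\alpha=\nu$:
\[
4^{-m}\binom{2m}{m}\sim(\pi\nu)^{-1/2}\sum_k\tau_k\nu^{-k}.
\]
Since all odd-index $\tau_k$ vanish, only even powers of $\nu^{-1}$ appear, and multiplying by $N$ gives the stated bracket. The coefficients $\tau_2=-1/64$, $\tau_4=21/8192$ and $\tau_6=-671/524288$ are quoted from the proposition. For $\tau_8$ I run the recursion \eqref{alpha-14} with $k=8$:
\[
\tau_8=\tfrac18\bigl(2^{-5}E_2\tau_6+2^{-9}E_4\tau_4+2^{-13}E_6\tau_2+2^{-17}E_8\tau_0\bigr),
\]
with $E_8=1385$, and check that it equals $180323/134217728$. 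This arithmetic is routine but is the one spot where a slip is likely, so I would verify it carefully.

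\emph{Error term and optimality.} Truncating after the $\nu^{-2R}$ term leaves relative error $O(\nu^{-2R-1})$, since that is the next index. That index is odd and its coefficient is zero, so the remainder after the even term $2R$ is in fact the truncation after index $2R+1$, which is $O(\nu^{-2R-2})$. Multiplying by $N(\pi\nu)^{-1/2}$ gives absolute error $O(N\nu^{-2R-5/2})$. Since $m\to\infty$ as $N\to\infty$, and the two parity subsequences share one uniform expansion in $\nu$, the statement holds along all $N$.

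The main obstacle is making the optimality claim precise. I would argue it as follows. For a general shift $\alpha$, the $k=1$ coefficient of the shifted expansion is proportional to $B_2(\alpha)$-type data, and explicitly $\tau_1=\alpha/2-1/8$; it vanishes exactly at $\alpha=\tfrac14$, since $\omega_1=-1/8$ re-expanded gives $\tau_1=\tfrac{\alpha}{2}-\tfrac18$. At $\alpha=\tfrac14$, the identity $B_{2j+1}(\tfrac14)\propto E_{2j}$ kills every odd index. So $\alpha=\tfrac14$ is the unique shift annihilating the $\nu^{-1}$ term, and it annihilates all odd terms. Finally, the parity of $N$ enters only through $\nu$, because both parities reduce to the same function $4^{-m}\binom{2m}{m}$ of $m=\lfloor N/2\rfloor$.
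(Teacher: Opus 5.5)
Your proposal is correct and is essentially the paper's proof: Lemma~\ref{lem:E-bin-walk} with $m=N$, the parity split to the common form $N\binom{2h}{h}4^{-h}$ with $h=\lfloor N/2\rfloor$, and then Proposition~\ref{prop:binomial-shift} at $m=h$ (so $m+\alpha=\nu$), whose vanishing odd-index $\tau_k$ give the even-power bracket. Your extras go slightly beyond the paper: the recursion check of $\tau_8=180323/134217728$ (it does come out right), the use of $\tau_{2R+1}=0$ to get the $O(\nu^{-2R-2})$ truncation error, and the uniqueness of the shift via $\tau_1=\alpha/2-\tfrac18$. One small correction: the odd-index terms vanish by the reflection $B_j(1-t)=(-1)^jB_j(t)$ in $\log\bigl(\Gamma(\nu+\tfrac14)/\Gamma(\nu+\tfrac34)\bigr)$, while the identity $B_{2j+1}(\tfrac14)\propto E_{2j}$ only evaluates the surviving even-index terms.
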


\begin{proof}
	The first equality is Lemma~\ref{lem:E-bin-walk}. To obtain the second, put
	$h=\lfloor N/2\rfloor$ and distinguish the two parities. If $N=2h$, then
	\[
		\binom{N-1}{\lfloor(N-1)/2\rfloor}
		=\binom{2h-1}{h-1}=\frac12\binom{2h}{h},
	\]
	and the factor $2^{-(N-1)}=2^{-(2h-1)}$ cancels the factor $1/2$, leaving
	$\binom{2h}{h}4^{-h}$. If $N=2h+1$, the same expression is obtained directly:
	\[
		\binom{N-1}{\lfloor(N-1)/2\rfloor}2^{-(N-1)}
		=\binom{2h}{h}2^{-2h}=\binom{2h}{h}4^{-h}.
	\]
	Thus the exact formula has the common form
	$\E(d)=N\binom{2h}{h}4^{-h}$ for both parities.

	Now $\nu=h+\tfrac14=(2N+(-1)^N)/4$. The shifted central-binomial expansion
	of Proposition~\ref{prop:binomial-shift}, specialized to the shift
	$\alpha=\tfrac14$, is
	\[
		\binom{2h}{h}4^{-h}
		\sim\frac1{\sqrt{\pi\nu}}\left[1-\frac1{64\nu^2}
		+\frac{21}{8192\nu^4}-\frac{671}{524288\nu^6}
		+\frac{180323}{134217728\nu^8}-\cdots\right].
	\]
	Multiplication by $N$ gives the stated complete expansion. The shifted
	form contains only even inverse powers of $\nu$ (the odd-index coefficients in
	Proposition~\ref{prop:binomial-shift} vanish), which is why the parity
	dependence is confined to the definition of $\nu$.
\end{proof}

\section{Concluding remarks}
	We have given a complete account of the first moment of the discrete total
	variation under the uniform multinomial null: an exact single sum, a
	closed form in central trinomial coefficients (valid for all $n\ge2$, with
	one-term evaluations at $n=2,4$ and a complete D-finite characterization for
	$n=3$), an exact three-term recurrence for the per-edge expectation $W_N$, and the
	full $1/N$ expansion with its $n$-independent constant $2/\sqrt\pi$. The complementary sparse
	regime (fixed $N$, $n\to\infty$) is treated in a separate paper.

	This is the first part of a series on the exact and asymptotic moments of
	discrete total variation under multinomial sampling. The mixture-conditioning
	method used here is the common thread: whereas the closed form of
	\S\ref{sec:Ed-exact} is tied to the uniform symmetry, conditioning on a two-bin
	total carries over to an arbitrary reference law. Part~2 treats the variance of
	$d$ under the uniform null; Part~3 the expectation under a general reference law
	$\mathbf r$, where this method generalizes; Part~4 the variance
	under a general reference law; and Part~5 the cyclic statistic $d^\circ$ as its subject proper --- its
	variance, the exact linear--cyclic transfer identity, and the large-bin variance
	ridge. Together, these moments form the \emph{moment backbone} of the null
	calibration of $d$; the asymptotic null laws themselves --- the fixed-$n$
	folded-Gaussian limit law, its expansions, and the large-$n$ normal
	approximation, which complement the exact dynamic-programming computation
	of~\cite{banic_elezovic_ct} --- are the subject of a companion paper, for which
	the moments computed in this series are the inputs.

\section*{Acknowledgments}
	The authors thank L.~Mihokovi\'c for pointing out the connection with the Legendre
	polynomials (Remark~\ref{rem:trinomial}).
	The authors used an AI assistant (Anthropic's Claude) for literature search,
	symbolic and numerical verification, and editorial support; all mathematical
	results were checked independently by the authors.

\end{document}